\theoremstyle{plain}                     
\newtheorem{theorem}{Theorem}[section]
\theoremstyle{definition}                
\newtheorem{definition}[theorem]{Definition}
\newcommand{\e}{\varepsilon}
\newcommand{\R}{\mathbb R}
\newcommand{\N}{\mathbb N}
\newcommand{\HH}{\mathcal H}
\newcommand{\LL}{\mathcal L}
\newcommand{\mres}{\mathbin{\vrule height 1.6ex depth 0pt width
0.13ex\vrule height 0.13ex depth 0pt width 1.3ex}}
\newcommand{\Div}{\mathrm{div}\,}
\newcommand{\dist}{\mathrm{dist}}
\newcommand{\supp}{\mathrm{supp}}
\newcommand{\sym}{\mathrm{sym}}
\newcommand{\Mn}{\mathbb M^{n\times n}_{sym}}
\newcommand{\MD}{\mathbb M^{n\times n}_D}
\newcommand{\Mnn}{\mathbb M^{n\times n}}
\newcommand{\wto}{\rightharpoonup}
\numberwithin{equation}{section}
\title[Relaxation of the Hencky model in perfect plasticity]{
Relaxation of the Hencky model 
in perfect plasticity} 
\author[M.G. Mora]{Maria Giovanna Mora}
\address[M.G. Mora]{Dipartimento di Matematica, Universit\`a di Pavia, via Ferrata 1, 27100 Pavia (Italy)}
\email{mariagiovanna.mora@unipv.it}
\subjclass[2010]{49J45 (74C05, 74G65)} 
\keywords{Perfect plasticity, Hencky model, functions with bounded deformation, relaxation.}
\begin{document}

\begin{abstract}
In this paper we give a full proof of the relaxation of the Hencky model in perfect plasticity, under suitable assumptions
for the domain and the Dirichlet boundary.
\end{abstract}

\maketitle

\section{Introduction}
The first complete mathematical treatment of the evolution problem in perfect plasticity
is due to Suquet \cite{S}. More recently, in \cite{DMDSM} plastic evolution has been revisited 
in the framework of the variational theory for rate-independent processes (see, e.g., \cite{MM,M}).
In this variational approach existence of a quasi-static evolution is proved by approximation via time discretization and by solving a suitable incremental minimum problem at each discrete time. 


More precisely, let $\Omega\subset\R^n$ denote the reference configuration of a plastic body.
The elasto-plastic behaviour of $\Omega$ is described by three kinematic variables: the displacement $u:\Omega\to\R^n$,
the elastic strain $e:\Omega\to\Mn$, and the plastic strain $p:\Omega\to\MD$. Here $\MD$ is the subspace of trace-free matrices in
$\Mn$. Moreover, the strain $Eu:=\sym\, Du$ is related to $e$ and $p$ by the following kinematic admissibility condition:
$$
Eu=e+p \quad \text{ in } \Omega.
$$
The requirement $p(x)\in\MD$ for every $x\in\Omega$ corresponds to the plastic incompressibility condition ${\rm tr}\, p=0$ in $\Omega$,
which is a usual requirement in the description of plastic behaviour in metals.

Let now $[0,T]$ be a time interval and assume for simplicity that the evolution is driven by a time-dependent boundary displacement $w:[0,T]\times\R^n\to\R^n$ 
prescribed on a portion $\Gamma_0$ of $\partial\Omega$.
Let $t_i$ be a given discrete time and let $(u_{i-1},e_{i-1},p_{i-1})$ be the elasto-plastic configuration of the body at
the previous discrete time $t_{i-1}$. Then,
the configuration $(u_i,e_i,p_i)$ at time $t_i$ is found as a solution of the minimum problem
\begin{multline}\label{min pb}
\min \Big\{ 
\int_\Omega Q(e(x))\, dx +\int_\Omega H(p(x)-p_{i-1}(x))\, dx : \\ 
(u,e,p) \text{ such that } Eu=e+p \text{ in }\Omega, \ u=w(t_i) \text{ on }\Gamma_0\Big\}.
\end{multline}
Here $Q:\Mn\to[0,+\infty)$ is a positive definite quadratic form, representing the elastic energy density.
The function $H:\MD\to[0,+\infty)$ plays the role of a plastic dissipation potential and it is defined as the support function 
of a given convex and closed set $K\subset\MD$, that is,
\begin{equation}\label{defH}
H(\xi):=\sup_{\sigma\in K} \sigma:\xi \quad \text{ for every } \xi\in\MD.
\end{equation}
The set $K$ represents the elasticity domain and its boundary $\partial K$ defines the so-called yield surface. For $p_{i-1}=0$  problem \eqref{min pb} is usually referred to as the Hencky model of perfect plasticity.

From the definition \eqref{defH} it easily follows that the function $H$ is convex and has linear growth.
Thus, the natural domain of the functional in \eqref{min pb} is the class $\mathcal A_{reg}(w(t_i), \Gamma_0)$ of all triplets $(u,e,p)\in LD(\Omega)\times L^2(\Omega;\Mn)\times L^1(\Omega;\MD)$ such that
$$
Eu=e+p\quad \text{in }\Omega, \qquad
u=w(t_i)\quad\text{on }\Gamma_0.
$$
We recall that $LD(\Omega)$ is the space of $L^1(\Omega;\R^n)$ functions whose symmetric gradient is in $L^1(\Omega;\Mn)$.

However, since $LD(\Omega)$ and $L^1(\Omega;\MD)$ are not reflexive spaces,
problem \eqref{min pb} has in general no solution in the class $\mathcal A_{reg}(w(t_i), \Gamma_0)$.
For this reason, in \cite{DMDSM}, as well as in the subsequent literature, problem \eqref{min pb} is replaced by the following
weak formulation:
\begin{equation}\label{min pb 2}
\min \Big\{ 
\int_\Omega Q(e(x))\, dx +\HH_{\Omega\cup\Gamma_0}(p-p_{i-1}) : \ (u,e,p)\in \mathcal A(w(t_i), \Gamma_0)\Big\},
\end{equation}
where $\HH_{\Omega\cup\Gamma_0}(p-p_{i-1})$ is defined according to the theory of convex functions of measures (see Section~\ref{sec:math-p} for more details) and the class
$\mathcal A(w(t_i), \Gamma_0)$ is the set of all triplets $(u,e,p)\in BD(\Omega)\times L^2(\Omega;\Mn)\times M_b(\Omega\cup\Gamma_0;\MD)$ such that
$$
Eu=e+p\quad \text{in }\Omega, \qquad
p=(w(t_i)-u)\odot\nu_{\partial\Omega}\HH^{n-1}\quad\text{on }\Gamma_0.
$$
In other words, the plastic strain $p$ is allowed to take values in a space of measures (and thus, the displacement $u$
in the space $BD(\Omega)$ of functions with bounded deformation) and the boundary condition is relaxed (the boundary value may be not attained by $u$ and in this case a discontinuity is developed along $\Gamma_0$).

It is easy to see that problem \eqref{min pb 2} is an extension of problem \eqref{min pb}, meaning that any solution to \eqref{min pb} solves \eqref{min pb 2}, as well. In the isotropic case with von Mises yield criterion it was shown in
\cite[Theorem~2.3]{AG} and in \cite[Chapter~II, Section~6.2]{T} that \eqref{min pb} and \eqref{min pb 2} have the same infimum.
In this article we prove that the relation between the two problems is much stronger: \eqref{min pb 2} is the relaxed problem of \eqref{min pb} in the sense of $\Gamma$-convergence, with respect to the natural topology, and as such, it is its natural extension (Theorem~\ref{thm:relax}). In particular, by the abstract theory of $\Gamma$-convergence \cite{DM} this implies that not only the two problems have the same infimum, but also the minimisers of \eqref{min pb 2} coincide with all the limits of minimising sequences of \eqref{min pb}.

\

A fundamental step in establishing Theorem~\ref{thm:relax} is to prove the density of the class $\mathcal A_{reg}(w(t_i), \Gamma_0)$ in the class $\mathcal A(w(t_i), \Gamma_0)$. Density is intended with respect to a topology that guarantees convergence of the energies, that is, strong-$L^2$ convergence for the elastic strains and strict convergence in the sense of measures for the plastic strains. This question is highly non trivial. Indeed, by the kinematic admissibility and the plastic incompressibility condition, displacements in $\mathcal A(w(t_i), \Gamma_0)$ belong to the space
$$
U(\Omega):=\{ u\in BD(\Omega): \ \Div u\in L^2(\Omega)\}.
$$
For $n>2$ this space is not local: if $u\in U(\Omega)$, then it is not true in general that $\varphi u\in U(\Omega)$
for every $\varphi\in C^\infty_c(\Omega)$. This fact prevents any na\"\i ve approximation based on local arguments and partitions of unity.
Moreover, since displacements in $\mathcal A_{reg}(w(t_i), \Gamma_0)$ attain exactly the boundary condition on $\Gamma_0$, one needs to correct the boundary value, again without leaving the space $U(\Omega)$, before any regularization procedure.

In Section~\ref{sec:density} we prove two versions of this density result, under two different sets of assumptions for the domain $\Omega$ and the Dirichlet boundary $\Gamma_0$. In Theorem~\ref{thm:density} we assume $\partial\Omega$ to be of class $C^{2,1}$ and $\Gamma_0$ to be any open subset of $\partial\Omega$, while in Theorem~\ref{thm:density-dir} we consider the full Dirichlet case $\Gamma_0=\partial\Omega$ with $\partial\Omega$ of Lipschitz regularity. 
We believe these density results to be of independent interest. For instance, Theorem~\ref{thm:density-dir} has been applied in the recent paper \cite{C} to characterise the asymptotic behaviour of a certain family of quasistatic evolutions in a strain gradient plasticity model coupled with damage.

A crucial ingredient in the proof of both Theorems~\ref{thm:density} and~\ref{thm:density-dir} is a result by Bogovsky \cite[Theorem~1]{B} (see also \cite[Theorem~2.4]{B-S}), stating the following: there exists a constant $C>0$ such that for every function $\psi\in L^p(\Omega)$ with null average on $\Omega$, there exists a solution $v\in W^{1,n/(n-1)}_0(\Omega;\R^n)$ 
to the problem   
$$
\begin{cases}
\Div v=\psi & \text{ in }\Omega,\\
v=0 & \text{ on }\partial\Omega.
\end{cases}
$$
satisfying the estimate
$$
\|v\|_{W^{1,n/(n-1)}}\leq C \|\psi\|_{L^{n/(n-1)}}.
$$
By local mollifications we first construct smooth approximating triplets $(u_k,e_k,p_k)$ with $\Div u_k\in L^{n/(n-1)}(\Omega)$.
Bogovski's result allows us to correct the displacements $u_k$ in such a way to gain the $L^2$-integrability of the divergence.

Concerning the boundary condition issue, in the Dirichlet case (Theorem~\ref{thm:density-dir}) we extend $u$ by 
the boundary datum $w$ outside $\Omega$ and, before mollifying, we perform a local translation of the boundary towards the interior of $\Omega$. If $\Gamma_0\neq\partial\Omega$ (Theorem~\ref{thm:density}), we apply a clever construction by Anzellotti and Giaquinta \cite{AG}, that requires higher regularity of the boundary. In Theorem~\ref{AGlemma}
we provide a full proof of this construction, since the original proof in \cite{AG} contains several misprints and inaccuracies.

We finally mention that an approximation result close in spirit to Theorems~\ref{thm:density} and~\ref{thm:density-dir} has been proved in \cite[Theorem~4.7]{DaM} for a model of perfectly plastic plates, but the proof is more conventional and does not require Bogovski's argument, since the model under consideration can be partially reduced to a two-dimensional setting.

\

This paper provides an answer to a basic question in the mathematical theory of perfect plasticity: what is the exact relation between the classical formulation \eqref{min pb} of the Hencky model and
the weak formulation \eqref{min pb 2}? Although this is a very natural question, it has gone unregarded by the mathematical
community and has been left open, up to the present contribution. The only established results available in the literature were those
 in \cite{AG} and in \cite{T}, concerning the isotropic case with von Mises yield condition and showing only equality of infima. However, we are much indebted to these works, since several arguments in the proofs of this article were inspired by ideas contained in \cite{AG} and in~\cite{T}.

\section{Mathematical Preliminaries}\label{sec:math-p}

In this section we introduce some notation and recall some notions that will be used throughout the article.
\medskip

\noindent{\bf Distance function.}
Let $U$ be a bounded open set of $\R^n$ with a $C^2$ boundary and let $d(x):=\dist(x,\partial U)$ for every $x\in \overline U$.
It is well known that there exists $a>0$ such that, setting $U_a:=\{ x\in\overline U:\ d(x)<a\}$, then for every $x\in U_a$ there exists a unique projection
$\pi(x)\in \partial U$ such that $d(x)=|x-\pi(x)|$; moreover, $d$ is differentiable in $U_a$ and $\nabla d(x)=-\nu_{\partial U}(\pi(x))$ for every $x\in U_a$,
where $\nu_{\partial U}$ is the outward unit normal vector to $\partial U$. If $\partial U\in C^k$ with $k\geq 2$, then $d\in C^k(U_a)$. See, e.g., \cite[Section~14.6]{GT}.\medskip

\noindent{\bf Matrices.} 
The space of symmetric $n\times n$ matrices is denoted by $\Mn$. It is endowed
with the euclidean scalar product $\xi{\,:\,}\zeta= \sum_{i,i}\xi_{ij}\zeta_{ij}$ and the
corresponding euclidean norm $|\xi|^2=(\xi{\,:\,}\xi)^{1/2}$. The orthogonal complement of the
subspace $\R I_{n\times n}$ spanned by the identity matrix $I_{n\times n}$ 
is the subspace $\MD$ of all matrices in $\Mn$ with trace zero. 
For every $\xi\in\Mn$ the orthogonal projection of $\xi$ on $\MD$ is the deviator $\xi_D$ of $\xi$, given by
$$
\xi_D=\xi-\frac1n ({\rm tr}\,\xi)I_{n\times n}.
$$

The symmetrised tensor product $a\odot b$ of two vectors $a,b\in\R^n$ is the 
symmetric matrix with entries $(a\odot b)_{ij}=(a_ib_j + a_jb_i)/2$.\medskip

\noindent{\bf Measures.}
Given a Borel set $B\subset\R^n$ and a finite dimensional Hilbert space $X$, $M_b(B;X)$ denotes the space of all bounded
Borel measures on $B$ with values in $X$, endowed with the norm $\|\mu\|_{M_b}:=|\mu|(B)$, where
$|\mu|\in M_b(B;\R)$ is the variation of the measure $\mu$. If $\mu$ is absolutely continuous with respect to the
Lebesgue measure $\LL^n$, we always identify $\mu$ with its density with respect to $\LL^n$, which is a function in $L^1(B;X)$.

If the relative topology of $B$ is locally compact, by the Riesz representation Theorem the space $M_b(B;X)$ can be identified with the dual of $C_0(B;X)$, which is the space of all continuous functions $\varphi:B\to X$ such that
the set $\{|\varphi|\geq\delta\}$ is compact for every $\delta>0$. The weak* topology on $M_b(B;X)$ is defined
using this duality. Finally, we say that a sequence of measures $\mu_k$ converges to $\mu$ strictly in $M_b(B;X)$
if $\mu_k\wto\mu$ weakly$^*$ in $M_b(B;X)$ and $|\mu_k|(B)\to |\mu|(B)$.
\medskip

\noindent{\bf Convex functions of measures.}
Let $U$ be an open set of $\R^n$. 
For every $\mu\in M_b(U;X)$ let $d\mu/d|\mu|$ be the Radon-Nikod\'ym derivative of $\mu$ with respect to
its variation $|\mu|$. Let $H:X\to[0,+\infty)$ be a convex and positively one-homogeneous function such that
$$
r|\xi|\leq H(\xi)\leq R|\xi| \quad \text{for every }\xi\in X,
$$
where $r$ and $R$ are two constants, with $0<r\leq R$. According to the theory of convex functions
of measures, developed in \cite{GS}, we introduce the nonnegative Radon measure
$H(\mu)\in M_b(U)$ defined by
$$
H(\mu)(A):=\int_A H\Big(\frac{d\mu}{d|\mu|}\Big)\, d|\mu|
$$
for every Borel set $A\subset U$. We also consider the functional $\HH_U: M_b(U;X)\to[0,+\infty)$ defined~by
$$
\HH_U(\mu):=H(\mu)(U)=\int_U H\Big(\frac{d\mu}{d|\mu|}\Big)\, d|\mu|
$$
for every $\mu\in M_b(U;X)$. One can prove that $\HH_U$ 
is lower semicontinuous on $M_b(U;X)$ with respect to weak* convergence (see, e.g., \cite[Theorem~2.38]{AFP}).
\medskip

\noindent{\bf Functions with bounded deformation.}
Let $U$ be an open set of $\R^n$. 
The space $BD(U)$ of functions with {\em bounded deformation} is
the space of all functions $u\in L^1(U;\R^n)$ whose symmetric gradient $Eu:=\sym\, Du$ (in the sense of distributions) belongs to $M_b(U;\Mn)$. It is easy to see that $BD(U)$ is a 
Banach space endowed with the norm
$$
\|u\|_{BD}:=\|u\|_{L^1} +\|Eu\|_{M_b}.
$$
We say that a sequence $(u^k)$ converges to $u$ weakly* in $BD(U)$ if $u^k\wto u$ weakly in
$L^1(U;\R^n)$ and $Eu^k\wto Eu$ weakly* in $M_b(U;\Mn)$. Every bounded
sequence in $BD(U)$ has a weakly* converging subsequence. If $U$ is bounded and has a Lipschitz boundary, $BD(U)$ can be embedded into $L^{n/(n-1)}(U;\R^n)$ and every function $u\in BD(U)$ has a trace,
still denoted by $u$, which belongs to $L^1(\partial U;\R^n)$. Moreover, if $\Gamma$ is a nonempty open subset of $\partial U$, there exists a constant $C>0$, depending on $U$ and $\Gamma$, such that
\begin{equation}\label{kornbd}
\|u\|_{L^1(\Omega)}\leq C\|u\|_{L^1(\Gamma)}+C\|Eu\|_{M_b}
\end{equation}
(see \cite[Chapter~II, Proposition~2.4 and Remark~2.5]{T}). 

We will also use the space $LD(U)$ defined as the space of all functions $u\in L^1(U;\R^n)$ whose symmetric gradient $Eu:=\sym\, Du$ belongs to $L^1(U;\Mn)$. The space $LD(U)$ is a 
Banach space endowed with the norm
$$
\|u\|_{LD}:=\|u\|_{L^1} +\|Eu\|_{L^1}.
$$
For the general properties of the spaces $BD(U)$ and $LD(U)$ we refer to~\cite{T}.

\section{Two Density Results}\label{sec:density}

In this section we prove two density results in the class of admissible triplets. 
We first introduce some notation.

\begin{definition}
Let $w\in W^{1,2}(\R^n;\R^n)$ and let $\Gamma_0$ be an open subset of $\partial\Omega$ (in the relative topology). The class $\mathcal A_{reg}(w,\Gamma_0)$ of regular triplets with boundary datum $w$ is defined as the set of all triplets $(u,e,p)\in LD(\Omega)\times L^2(\Omega;\Mn)\times L^1(\Omega;\MD)$ such that
\begin{align}
& Eu=e+p\quad \text{a.e.\ in }\Omega, \label{Areg1}
\\
& u=w \quad\text{on }\Gamma_0. \label{Areg2}
\end{align}
The class $\mathcal A(w, \Gamma_0)$ of triplets with boundary datum $w$ is defined as the set of all triplets $(u,e,p)\in BD(\Omega)\times L^2(\Omega;\Mn)\times M_b(\Omega\cup\Gamma_0;\MD)$ such that
\begin{align}
& Eu=e+p\quad \text{in }\Omega, \label{A1}
\\
& p=(w-u)\odot\nu_{\partial\Omega}\HH^{n-1}\quad\text{on }\Gamma_0. \label{A2}
\end{align}
\end{definition}

The first result of this section is an approximation result for triplets in $\mathcal A(w, \Gamma_0)$ in terms of regular triplets in
$\mathcal A_{reg}(w,\Gamma_0)$.

\begin{theorem}\label{thm:density}
Let $\Omega\subset \R^n$ be an open and bounded set with a $C^{2,1}$ boundary.
Let $w\in W^{1,2}(\R^n;\R^n)$ and let $(u,e,p)\in\mathcal A(w, \Gamma_0)$. Then there exists a sequence of triplets $(u_k, e_k,p_k)$ in $\mathcal A_{reg}(w,\Gamma_0)$ such that
\begin{align}
& u_k\to u \quad \text{strongly in } L^{n/(n-1)}(\Omega;\R^n), \label{uk}\\
& e_k\to e \quad \text{strongly in } L^2(\Omega;\Mn), \label{ek}\\
& p_k\wto p \quad \text{weakly$^*$ in } M_b(\Omega\cup\Gamma_0;\MD), \label{pk1} 
\end{align}
and
\begin{equation}\label{pk2}
\int_\Omega |p_k(x)|\,dx \to |p|(\Omega\cup\Gamma_0),
\end{equation}
as $k\to\infty$.
\end{theorem}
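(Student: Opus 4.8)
The plan is to build $(u_k,e_k,p_k)$ in three steps: correct the boundary behaviour on $\Gamma_0$, regularise by mollification, and finally correct the divergence so as to restore the trace-free constraint on the plastic strain. \emph{Step 1: reduction to the case $u\equiv w$ near $\Gamma_0$.} Since $w\in W^{1,2}(\R^n;\R^n)\subset LD(\O)$, if $u\equiv w$ on some relative neighbourhood $V$ of $\Gamma_0$ in $\overline\O$ then $Eu=Ew\in L^2(V\cap\O;\Mn)$, so $p=Ew-e\in L^2(V\cap\O;\MD)$ and, by \eqref{A2}, $p$ carries no mass on $\Gamma_0$; in particular $|p|(\O\cup\Gamma_0)=\int_\O|p|\,dx$. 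First I would prove that the triplets with this additional property are dense in $\mathcal A(w,\Gamma_0)$ for the topology of \eqref{uk}--\eqref{pk2}; this is precisely the content of the Anzellotti--Giaquinta construction, which I would invoke in the form established in Theorem~\ref{AGlemma}. It associates to any $(u,e,p)\in\mathcal A(w,\Gamma_0)$ a family $(u^\delta,e^\delta,p^\delta)\in\mathcal A(w,\Gamma_0)$, with $u^\delta\equiv w$ near $\Gamma_0$, obtained by pushing $\Gamma_0$ towards the interior of $\O$ along a suitable flow and converging to $(u,e,p)$ as $\delta\to0$; it is here that the $C^{2,1}$-regularity of $\partial\O$ is essential, in order to define such a flow compatibly with the geometry of $\partial\O$ near the relative boundary of $\Gamma_0$. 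By a diagonal argument (the relevant topology being metrizable on bounded sets) it then suffices to prove the theorem assuming $u\equiv w$ on a neighbourhood of $\Gamma_0$.

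\emph{Step 2: mollification.} I would then fix a cut-off $\varphi\in C^\infty(\R^n)$ equal to $1$ near $\Gamma_0$ and with $\supp\varphi\subset\{u=w\}$, extend $e$ by zero outside $\O$, and set
$$
u_k:=\varphi\,w+(1-\varphi)(u*\rho_k),\qquad e_k:=e*\rho_k,\qquad \widehat p_k:=Eu_k-e_k,
$$
where $\rho_k$ is a standard mollifier, chosen supported in cones pointing into $\O$ near $\partial\O\setminus\Gamma_0$ so that no singular part is produced on that portion of the boundary. Then $u_k\equiv w$ near $\Gamma_0$, $u_k\to u$ in $L^{n/(n-1)}(\O;\R^n)$, $e_k\to e$ in $L^2(\O;\Mn)$, and the usual strict-convergence estimates for mollifications of $BD$ functions (together with $\widehat p_k=p*\rho_k$ away from $\supp\varphi$) yield $\widehat p_k\wto p$ weakly$^*$ in $M_b(\O\cup\Gamma_0;\Mn)$ and $\int_\O|\widehat p_k|\,dx\to\int_\O|p|\,dx=|p|(\O\cup\Gamma_0)$. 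The obstruction to concluding here is that $\widehat p_k$ need not be trace-free: with $\psi_k:=\operatorname{tr}\widehat p_k=\Div u_k-\operatorname{tr} e_k$, the relevant error terms are the commutator $\nabla\varphi\cdot(w-u*\rho_k)$ and $\operatorname{tr} e-\operatorname{tr}(e*\rho_k)$, so one only gets $\psi_k\to0$ in $L^{n/(n-1)}(\O)$, not $\psi_k\equiv0$; and since $BD(\O)\hookrightarrow L^{n/(n-1)}(\O)$ with $n/(n-1)\leq2$, such terms cannot in general be absorbed into $e_k$ without destroying its $L^2$-integrability.

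\emph{Step 3: correction of the divergence.} Let $m_k$ denote the average of $\psi_k$ over $\O$. Applying the theorem of Bogovsky recalled in the Introduction to the zero-average function $m_k-\psi_k$, I obtain $v_k\in W^{1,n/(n-1)}_0(\O;\R^n)$ with
$$
\Div v_k=m_k-\psi_k,\qquad \|v_k\|_{W^{1,n/(n-1)}}\leq C\,\|m_k-\psi_k\|_{L^{n/(n-1)}}\longrightarrow0.
$$
Setting $u_k':=u_k+v_k$, $e_k':=e_k+\tfrac{m_k}{n}I_{n\times n}$ and $p_k':=Eu_k'-e_k'$, one computes $\operatorname{tr} p_k'=\Div u_k+\Div v_k-\operatorname{tr} e_k-m_k=\psi_k+(m_k-\psi_k)-m_k=0$, whence $p_k'\in L^{n/(n-1)}(\O;\MD)\subset L^1(\O;\MD)$; since moreover $v_k=0$ on $\partial\O\supset\Gamma_0$, we have $u_k'=w$ on $\Gamma_0$, and therefore $(u_k',e_k',p_k')\in\mathcal A_{reg}(w,\Gamma_0)$. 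Finally $v_k\to0$ in $W^{1,n/(n-1)}(\O;\R^n)$ and $m_k\to0$, so $u_k'\to u$ in $L^{n/(n-1)}$, $e_k'\to e$ in $L^2$, and $p_k'=\widehat p_k+Ev_k-\tfrac{m_k}{n}I_{n\times n}$ still satisfies \eqref{pk1}--\eqref{pk2}, because the last two terms tend to $0$ in $L^1(\O;\Mn)$ and hence affect neither the weak$^*$ limit nor the total variation of $\widehat p_k$.

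I expect the genuinely hard part to be Step~1, the Anzellotti--Giaquinta boundary construction: this is where the higher regularity of $\partial\O$ is indispensable, and the inward displacement of $\Gamma_0$ must be carried out while preserving simultaneously the kinematic compatibility \eqref{A1}, the deviatoric (plastic incompressibility) constraint on $p$, and the prescribed datum on $\partial\O\setminus\Gamma_0$; reconciling these conditions near the relative boundary of $\Gamma_0$ is exactly the delicate issue isolated in Theorem~\ref{AGlemma}. A secondary, more technical difficulty is to ensure that Step~2 produces \emph{strict} convergence of the plastic strains up to $\Gamma_0$, rather than merely weak$^*$ convergence, which is what makes the reduction of Step~1 effective.
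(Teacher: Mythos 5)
Your three-part skeleton (boundary correction via the Anzellotti--Giaquinta construction, mollification, Bogovsky correction of the divergence) matches the paper's strategy, and your Step~3 is a valid variant of the paper's divergence fix. The gap is in Step~1, and it propagates into Step~2.

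You claim that Theorem~\ref{AGlemma} yields a family $(u^\delta,e^\delta,p^\delta)\in\mathcal A(w,\Gamma_0)$ with $u^\delta\equiv w$ \emph{in a relative neighbourhood} of $\Gamma_0$, and you then build Step~2 around a cut-off $\varphi$ with $\supp\varphi\subset\{u=w\}$. This is not what Theorem~\ref{AGlemma} gives, and not what the paper's Step~2 achieves either. Theorem~\ref{AGlemma} only produces an auxiliary field $v$ with $v=u$ on $\partial\Omega$, $v\cdot\nabla d=0$ and $\Div v\in L^2$. The paper then sets $u_k:=u+\eta_k v$ with a boundary-layer cut-off $\eta_k$, which forces $u_k=0$ \emph{on $\Gamma_0$ in the trace sense only}; nowhere is $u_k$ made to coincide with the datum on an open neighbourhood, so a cut-off $\varphi$ as in your Step~2 need not exist. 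Moreover, the stronger reduction you assert cannot be obtained by the naive truncation: if one replaces $u$ by $w$ on $\{d<\delta\}$, the jump created along $\{d=\delta\}$ is $(w-u)\odot\nu_{\{d=\delta\}}\,\HH^{n-1}$, whose trace $(w-u)\cdot\nu$ has no reason to vanish, so the resulting $p^\delta$ would fail to be deviatoric. The entire point of the orthogonality $v\cdot\nabla d=0$ in Theorem~\ref{AGlemma} is precisely to make the correction term $\nabla\eta_k\odot v$ trace-free; your outline never uses this property, which is where the real difficulty of the theorem is resolved.

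A related, secondary point: once one only has $u_k=w$ on $\Gamma_0$ in the trace sense, the single global mollification $u*\rho_k$ with inward cones in your Step~2 is no longer enough, because you can no longer shelter behind $u_k\equiv w$ near $\Gamma_0$. The paper instead mollifies through a locally finite covering of $\Omega$ with a partition of unity (as in Temam), which is exactly what guarantees that the mollified displacement $\hat u_k$ keeps the same boundary trace as $u$ on $\partial\Omega$, so that $\hat u_k=w$ on $\Gamma_0$ is preserved through the regularisation.
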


The proof of Theorem~\ref{thm:density} is based on the following auxiliary result, that was stated in~\cite{AG}.
We give here a complete proof of this result, since the proof proposed in \cite{AG} contains several inaccuracies and misprints.

\begin{theorem}\label{AGlemma}
Let $\Omega\subset \R^n$ be an open and bounded set with a $C^{2,1}$ boundary.
Let $d(x):=\dist(x,\partial\Omega)$ for every $x\in\overline\Omega$ and let $\Omega_a:=\{x\in\overline\Omega: d(x)<a\}$ be such that
$d\in C^2(\Omega_a)$.
Finally, let $u\in L^1(\partial\Omega;\R^n)$ be such that $u\cdot \nu_{\partial\Omega}=0$ on $\partial\Omega$.
Then there exists $v\in W^{1,1}(\Omega;\R^n)\cap L^2(\Omega;\R^n)$, with $\Div v\in L^2(\Omega)$, such that $\supp\, v\subset \Omega_a$, 
$v=u$ on $\partial\Omega$, and
$$
v(x)\cdot\nabla d(x)=0 
$$ 
for a.e.\ $x\in\Omega_a$.
\end{theorem}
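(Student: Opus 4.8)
The idea is to construct $v$ explicitly near the boundary using the tubular-neighbourhood structure provided by the distance function $d$. Since $d \in C^2(\Omega_a)$ and $\nabla d(x) = -\nu_{\partial\Omega}(\pi(x))$, the level sets of $d$ foliate $\Omega_a$, and the projection $\pi$ identifies each such level set with $\partial\Omega$. First I would extend the boundary datum $u$ into $\Omega_a$ by setting, roughly, $\tilde u(x) := u(\pi(x))$, so that $\tilde u$ is constant along the normal lines; this automatically gives $\tilde u(x) \cdot \nu_{\partial\Omega}(\pi(x)) = 0$, i.e. $\tilde u(x) \cdot \nabla d(x) = 0$ in $\Omega_a$, which is the required orthogonality condition, and $\tilde u = u$ on $\partial\Omega$. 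Then I would multiply by a cutoff $\varphi(d(x))$, with $\varphi \in C^\infty([0,\infty))$, $\varphi \equiv 1$ near $0$ and $\varphi \equiv 0$ for $d \geq a/2$, so that $v := \varphi(d)\,\tilde u$ is supported in $\Omega_a$. The orthogonality is preserved since $\varphi(d)$ is scalar.

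**The divergence.** The point of the construction is that the $L^2$-bound on $\Div v$ comes for free from the structure, not from regularity of $u$. Writing $v = \varphi(d)\,\tilde u$, we have $\Div v = \varphi'(d)\,\nabla d \cdot \tilde u + \varphi(d)\,\Div \tilde u$. The first term vanishes identically by the orthogonality $\tilde u \cdot \nabla d = 0$. For the second term, $\Div \tilde u$ is the tangential divergence of $u$ transported along normals; the key observation is that $\tilde u$, being constant along the (one-dimensional) normal fibres, has all its derivatives "tangential", and differentiating $u(\pi(x))$ produces, by the chain rule, $D\pi(x)$ acting on derivatives of $u$. Here one must be careful: $u$ is only $L^1$ on $\partial\Omega$, so $\Div\tilde u$ is a priori only a distribution. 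The clean way around this is to argue by density: first prove the statement for $u \in C^1(\partial\Omega;\R^n)$ with $u\cdot\nu_{\partial\Omega}=0$, obtaining the explicit $v$ above with the bound $\|v\|_{W^{1,1}} + \|\Div v\|_{L^2} \leq C\|u\|_{L^1(\partial\Omega)}$ — crucially, the $L^2$-norm of $\Div v$ is controlled by the $L^1$-norm of $u$ alone, because $\Div v = \varphi(d)\Div\tilde u$ involves only first derivatives of the cutoff evaluated against $u$ itself plus curvature terms (the derivatives of $d$ and $\pi$ are bounded on $\Omega_a$ by the $C^{2,1}$ assumption), and these can be estimated pointwise without differentiating $u$, since $\nabla d$ is orthogonal to $\tilde u$. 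Then I would extend to general $u \in L^1(\partial\Omega;\R^n)$ with $u\cdot\nu_{\partial\Omega}=0$ by approximating in $L^1(\partial\Omega)$ (respecting the constraint $u\cdot\nu=0$, which is a closed linear condition), using the linear estimate to pass to the limit: the sequence $v_j$ is Cauchy in $W^{1,1}(\Omega) \cap L^2(\Omega)$ with $\Div v_j$ Cauchy in $L^2(\Omega)$, and the orthogonality and boundary-trace conditions pass to the limit.

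**The main obstacle.** The delicate point is making precise the claim that $\Div v$ is controlled in $L^2$ by $\|u\|_{L^1(\partial\Omega)}$ — equivalently, that no tangential derivative of $u$ actually appears in $\Div v$. This hinges on a careful computation in the curvilinear coordinates $(d, y)$ with $y \in \partial\Omega$, where the volume element and the Jacobian of the map $x \mapsto (d(x), \pi(x))$ involve the principal curvatures of $\partial\Omega$; one must verify that after integrating, the terms containing derivatives of $u$ either cancel (via the orthogonality $\varphi'(d)\nabla d \cdot \tilde u = 0$) or can be integrated by parts onto the smooth factors $\varphi(d)$ and the geometric coefficients. Getting this bookkeeping right — in particular handling the fact that $\tilde u$ need not be tangential to the intermediate level sets $\{d = c\}$, only to satisfy $\tilde u \cdot \nabla d = 0$ — is exactly the place where the original argument in \cite{AG} is reported to be inaccurate, so I would carry out this change of variables explicitly and track every term. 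The $C^{2,1}$ regularity of $\partial\Omega$ is used precisely to guarantee $d \in C^2(\Omega_a)$ with bounded second derivatives, so that $\Div\tilde u$ (as a distribution paired against test functions, then against $\varphi(d)$) stays in $L^2$ uniformly.
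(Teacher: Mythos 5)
Your extension $v := \varphi(d)\,(u\circ\pi)$ has the right orthogonality and trace properties, but the central claim --- that $\|\Div v\|_{L^2}$ is controlled by $\|u\|_{L^1(\partial\Omega)}$, so that no tangential derivative of $u$ survives in $\Div v$ --- is false, and this is precisely the crux of the theorem. Take $n=2$, $\Omega$ the unit disk, and a smooth tangential datum $u=f(\theta)\hat\theta$. Then $\tilde u(x)=u(x/|x|)$ and, in polar coordinates, $\Div\tilde u=f'(\theta)/r$. With $f(\theta)=\sin(N\theta)$ the $L^1$-norm of $u$ stays bounded while $\|\Div\tilde u\|_{L^2(\Omega_a)}\sim N\to\infty$. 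The orthogonality $\tilde u\cdot\nabla d=0$ kills the normal-derivative contribution to $\Div v$, but it does nothing to the tangential divergence: $\Div(u\circ\pi)$ genuinely contains first tangential derivatives of $u$, and for a mere $L^1$ boundary datum these have no reason to produce an $L^2$ function. Consequently there is no linear estimate to push through a density argument, and the construction as you describe it does not produce $\Div v\in L^2(\Omega)$.

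The missing idea is a regularization in the tangential variables that changes as one moves inward. In the paper this is Step~1 of the proof: after flattening, one picks a sequence of \emph{smooth} functions $\theta_j\to u$ in $L^1$ of the tangential variable and levels $\tau_j\downarrow 0$, and defines $v$ by linearly interpolating between $\theta_j$ and $\theta_{j+1}$ on the slab $\{\tau_{j+1}\le x_n<\tau_j\}$. The tangential derivatives $\partial_i v$ ($i<n$) then have $L^2$-norm squared bounded by $\sum_j(\tau_j-\tau_{j+1})(\|\partial_i\theta_j\|_{L^2}^2+\|\partial_i\theta_{j+1}\|_{L^2}^2)$, which is made finite by choosing $\tau_j$ to decrease fast enough relative to $\|\theta_j\|_{W^{1,2}}$ --- the bound is not linear in $u$ and depends on the chosen approximating sequence. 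The normal derivative $\partial_n v$ is only in $L^1$, with $\|\partial_n v\|_{L^1}=\sum_j\|\theta_j-\theta_{j+1}\|_{L^1}$. The orthogonality condition $v\cdot e_n=0$ is then used in Step~2 to show that, after pushing forward through the boundary-flattening diffeomorphisms, $\Div v$ involves only the tangential derivatives $\partial_i v_j$ ($i<n$), hence lies in $L^2$ even though $\partial_n v$ does not. So orthogonality is necessary but not sufficient: without the layered mollification of the datum, the tangential part of $\Div v$ is not controlled, and your construction fails at the point you yourself flag as delicate.
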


\begin{proof}
We introduce the following notation:
$$
Q:=(-1,1)^n, \qquad Q^+:=(-1,1)^{n-1}{\times}(0,1), \qquad Q_0:=(-1,1)^{n-1}{\times}\{0\}.
$$
The proof is subdivided into two steps.\medskip

\noindent{\it Step 1.}
We first prove that, given $u\in L^1((-1,1)^{n-1};\R^{n-1})$, there exists a function $v\in W^{1,1}(Q^+;\R^n)\cap L^2(Q^+;\R^n)$, with $\partial_i v\in L^2(Q^+;\R^n)$ for $i=1,\dots,n-1$, such that $v=(u,0)$ on $Q_0$ and $ v\cdot e_n=0$ a.e.\ in $Q^+$.

Let $u\in L^1((-1,1)^{n-1};\R^{n-1})$ be given.
Let $\{\tau_j\}$ be a sequence of positive numbers decreasing to $0$, as $j\to\infty$, and 
let $\{\theta_j\}\subset C^\infty_c((-1,1)^{n-1};\R^{n-1})$ be such that $\theta_0\equiv0$ and 
\begin{equation}\label{trace-approx}
\theta_j\to u \quad \text{strongly in } L^1((-1,1)^{n-1};\R^{n-1}), 
\end{equation}
as $j\to\infty$. We denote the coordinates in $Q^+$ by $(x', x_n)\in (-1,1)^{n-1}{\times}(0,1)$
and we define
$$
v(x',x_n):=\begin{cases}
0 & \text{ for } x_n\geq \tau_0,
\smallskip \\
\Big(\theta_j(x') +\dfrac{x_n-\tau_j}{\tau_{j+1}-\tau_j}\big(\theta_{j+1}(x')-\theta_j(x')\big)  ,0\Big) & \text{ for } \tau_{j+1}\leq x_n < \tau_j,\ j\geq0.
\end{cases}
$$
It is clear that $v\cdot e_n=0$ in $Q^+$.
By straightforward computations we have that
$$
\|v\|^2_{L^2}\leq \sum_{j=0}^\infty (\tau_j-\tau_{j+1})\big(\|\theta_j\|_{L^2}^2+\|\theta_{j+1}\|_{L^2}^2\big),
$$
and analogously,
$$
\|\partial_i v\|^2_{L^2}\leq \sum_{j=0}^\infty (\tau_j-\tau_{j+1})\big(\|\partial_i\theta_j\|_{L^2}^2+\|\partial_i\theta_{j+1}\|_{L^2}^2\big),
$$
for $i=1,\dots,n-1$, while
$$
\|\partial_n v\|_{L^1}= \sum_{j=0}^\infty \|\theta_j -\theta_{j+1}\|_{L^1}.
$$
Therefore, a suitable choice of the convergence rates of $\{\tau_j\}$ and $\{\theta_j\}$ guarantees that $v\in W^{1,1}(Q^+;\R^n)\cap L^2(Q^+;\R^n)$, with $\partial_i v\in L^2(Q^+;\R^n)$ for $i=1,\dots,n-1$. Finally, in view of \eqref{trace-approx},
the trace of $v$ on $Q_0$ coincides with $(u,0)$.

We also note that, if $\supp\, u\subset \omega$ where $\omega$ is an open set compactly contained in $(-1,1)^{n-1}$, then we can choose the sequence 
$\{\theta_j\}$ in such a way that $\supp\,\theta_j\subset\omega$ for every $j$, so that $\supp\, v\subset \omega\times[0,1)$.\medskip

\noindent{\it Step 2.} We now prove the general statement. Let $u\in L^1(\partial\Omega;\R^n)$ be such that $u\cdot \nu_{\partial\Omega}=0$ on $\partial\Omega$. We can cover $\partial\Omega$ with a finite number of open sets $A_j$, $j=1,\dots, N$, such that
$\cup_j A_j\cap\Omega\subset\Omega_a$ and 
for every $j=1,\dots, N$ there exists a $C^{1,1}$ diffeomorphism $\Phi_j:A_j\to Q$ satisfying $\Phi_j(A_j\cap\Omega)=Q^+$,
$\Phi_j(A_j\cap\partial\Omega)=Q_0$, and $D\Phi_j(x)\nabla d(x)=e_n$
for every $x\in A_j\cap\Omega$. 

Let $\{\varphi_j\}$ be a partition of unity subordinated to the covering $\{A_j\}$. For every $j=1,\dots, N$ we set $u_j:=\varphi_j u$ and we consider
$$
\hat u_j(y'):= 
(D\Phi_j^{-1}(y',0))^T u_j(\Phi_j^{-1}(y',0))
$$
for a.e.\ $y'\in(-1,1)^{n-1}$.
Note that
$$
\hat u_j\cdot e_n=  u_j(\Phi_j^{-1}(y',0)) \cdot D\Phi_j^{-1}(y',0) e_n
=- u_j(\Phi_j^{-1}(y',0)) \cdot \nu_{\partial\Omega}(\Phi_j^{-1}(y',0))=0, 
$$
where we used that $\nabla d=-\nu_{\partial\Omega}\circ\pi$.
Therefore, by Step~1 there exists a function $v_j\in W^{1,1}(Q^+;\R^n)\cap L^2(Q^+;\R^n)$, with $\partial_i v_j\in L^2(Q^+;\R^n)$ for $i=1,\dots,n-1$, such that $\supp\, v_j$ is compactly contained in $A_j\cap\overline\Omega$, $v_j=\hat u_j$ on $Q_0$ and $v_j\cdot e_n=0$ a.e.\ in~$Q^+$. We define
$$
v:=\sum_{j=1}^N  (D\Phi_j)^T v_j\circ\Phi_j.
$$
Since $D\Phi_j$ is $C^{0,1}$, we have that $v\in W^{1,1}(\Omega;\R^n)\cap L^2(\Omega;\R^n)$. Moreover, by construction it is clear that
$\supp\, v\subset\Omega_a$, 
$v=u$ on $\partial\Omega$, and $v\cdot\nabla d=0$ a.e.\ in $\Omega_a$. 

It remains to check that $\Div v\in L^2(\Omega)$. 
Straightforward computations lead to
\begin{equation}\label{div-v}
\Div v=\sum_{j=1}^N  \Delta \Phi_j\cdot (v_j\circ\Phi_j)
+ \sum_{j=1}^N {\rm tr}\, \big((D\Phi_j)^T(Dv_j\circ\Phi_j)D\Phi_j\big).
\end{equation}
Let us focus on the second term on the right-hand side.
For every $j=1,\dots,n$, let $R_j\in C^1(A_k\cap\partial\Omega;\Mnn)$ be such that 
$R_j(x)\in SO(n)$ and $R_j(x)e_n=-\nu_{\partial\Omega}(x)$ for every $x\in A_j\cap\partial\Omega$.
Let $P_j:=R_j\circ\pi$.
%
%
Then,
\begin{align*}
{\rm tr}\, \big((D\Phi_j)^T(Dv_j\circ\Phi_j)D\Phi_j\big) & = \sum_{i=1}^n (D\Phi_j)^T(Dv_j\circ\Phi_j)D\Phi_j P_j e_i\cdot P_j e_i
\\
& = \sum_{i=1}^n (Dv_j\circ\Phi_j)D\Phi_j P_j e_i\cdot D\Phi_j P_j e_i.
\end{align*}
Since $D\Phi_j P_j e_n=D\Phi_j \nabla d=e_n$ and $v_j\cdot e_n=0$, the previous expression reduces to
\begin{equation}\label{expres}
{\rm tr}\, \big((D\Phi_j)^T(Dv_j\circ\Phi_j)D\Phi_j\big)
= \sum_{i=1}^{n-1} (Dv_j\circ\Phi_j)D\Phi_j P_j e_i\cdot D\Phi_j P_j e_i.
\end{equation}
For every $i=1,\dots, n-1$ and $x_0\in A_j\cap\Omega$ the vector $P_j(x_0)e_i$ is orthogonal to $\nabla d(x_0)$, hence, it is a tangent vector to the level set 
$\{x\in \Omega:\ d(x)=d(x_0)\}$; the Jacobian $D\Phi_j$ maps such vectors into vectors orthogonal to $e_n$, thus, the expression in \eqref{expres} depends only on $\partial_i v_j$ for $i=1,\dots,n-1$, and as such, it belongs to $L^2(\Omega)$. Since $v_j$ belongs to $L^2(\Omega;\R^n)$ as well, identity \eqref{div-v} implies that $\Div v\in L^2(\Omega)$.
\end{proof}

We are now in a position to prove Theorem~\ref{thm:density}.

\begin{proof}[Proof of Theorem~\ref{thm:density}.]
Upon replacing the triplet $(u,e,p)$ by $(u-w,e-Ew,p)$, we can assume that $w=0$.
The proof is subdivided into three steps.\medskip

\noindent{\it Step 1.} We first prove the statement assuming that $(u,e,p)\in\mathcal A(0, \Gamma_0)$ satisfies the additional condition $u=0$ on $\Gamma_0$ 
(hence, $p=0$ on $\Gamma_0$ by \eqref{A2}). In this step we only need $\partial\Omega$ to be of Lipschitz regularity.

The proof follows closely that of \cite[Theorem~II--3.4]{T}.
Let $k\in\N$.
Let $\{A_i\}_{i\in\N}$ be a locally finite covering of $\Omega$ and let $\{\varphi_i\}_{i\in\N}$ be a partition of unity subordinated to it.
Let $\{\varrho_\e\}$ be a family of mollifiers. For every $i$ we can find $\e_i$ such that
\begin{align}
& \|(\varphi_i u)\ast\varrho_{\e_i}-\varphi_i u\|_{L^{n/(n-1)}}\leq \frac1k\frac{1}{2^i}, \label{moll1}
\\
& \|(\nabla\varphi_i\odot u)\ast\varrho_{\e_i}-\nabla\varphi_i\odot u\|_{L^{n/(n-1)}}\leq \frac1k\frac{1}{2^i}, \label{moll3}
\\
&
\|(\varphi_i e)\ast\varrho_{\e_i}- \varphi_i e\|_{L^2}\leq \frac1k\frac{1}{2^i}, \label{moll4}
\\
& \Big| \int_\Omega |(\varphi_i p)\ast\varrho_{\e_i}|\, dx- |\varphi_i p|(\Omega)\Big|\leq \frac1k\frac{1}{2^i}. \label{moll5}
\end{align}
We then define
$$
\hat u_k:=\sum_{i=0}^\infty (\varphi_i u)\ast\varrho_{\e_i}.
$$
By \eqref{moll1}--\eqref{moll5} it is clear that $\hat u_k\in C^\infty(\Omega;\R^n)\cap LD(\Omega)$ and 
\begin{equation} \label{uk-conv}
\|\hat u_k-u\|_{L^{n/(n-1)}} \leq\frac1k.
\end{equation}
Since $\Div u={\rm tr}\, e$ in $\Omega$, \eqref{moll3} and \eqref{moll4} yield
\begin{equation}\label{div-uk}
\|\Div \hat u_k- \Div u\|_{L^{n/(n-1)}}\leq\frac1k.
\end{equation}
Moreover, one can show (see \cite[Theorem~II--3.3]{T}) that $\hat u_k=u$ on $\partial\Omega$, hence $\hat u_k=0$ on $\Gamma_0$.
Since $\Div u\in L^2(\Omega)$ and \eqref{div-uk} holds, we can construct $\psi_k\in C^\infty_c(\Omega)$ such that
\begin{equation}\label{app-div}
\|\psi_k-\Div u\|_{L^2}\leq\frac1k
\end{equation}
and
\begin{equation}\label{same-av}
\int_\Omega \psi_k(x)\, dx =\int_\Omega \Div \hat u_k(x)\, dx.
\end{equation}
We denote by $v_k$ a solution of the system
$$
\begin{cases}
\Div v_k=\psi_k-\Div \hat u_k & \text{ in }\Omega,\\
v_k=0 & \text{ on }\partial\Omega.
\end{cases}
$$
By \cite[Theorem~1]{B} (see also \cite[Theorem~2.4]{B-S}) condition \eqref{same-av} guarantee the existence of a solution $v_k\in  W^{1,n/(n-1)}_0(\Omega;\R^n)$ such that
\begin{equation}\label{vk-est}
\|v_k\|_{W^{1,n/(n-1)}}\leq C \|\psi_k-\Div \hat u_k\|_{L^{n/(n-1)}},
\end{equation}
where $C$ is a constant independent of $k$. 

We now set
\begin{align*}
& u_k:=\hat u_k+v_k, \qquad  e_k:=\sum_{i=0}^\infty (\varphi_i e)_D\ast\varrho_{\e_i} + \frac1n \psi_k \, I_{n{\times}n},
\\
& p_k:=\sum_{i=0}^\infty (\varphi_i p)\ast\varrho_{\e_i}+ \sum_{i=0}^\infty (\nabla\varphi_i \odot u)_D\ast\varrho_{\e_i} + (Ev_k)_D.
\end{align*}
It is easy to see that $(u_k,e_k,p_k)\in \mathcal A_{reg}(0,\Gamma_0)$ for every $k$. Moreover, by \eqref{uk-conv}--\eqref{app-div} and \eqref{vk-est} it is clear that $u_k$ converges
to $u$ strongly in $L^{n/(n-1)}(\Omega;\R^n)$ and by \eqref{moll4} and \eqref{app-div} that $e_k$ converges to $e$ strongly in $L^2(\Omega;\Mn)$, as $k\to\infty$. To conclude the proof, it is enough to show that
\begin{equation}\label{pk-est}
\limsup_{k\to\infty}\int_\Omega |p_k|\, dx\leq |p|(\Omega)=|p|(\Omega\cup\Gamma_0).
\end{equation}
Indeed, if \eqref{pk-est} holds, then $\{p_k\}$ is bounded in $M_b(\Omega\cup\Gamma_0;\MD)$. In particular, there exists $q\in M_b(\Omega\cup\Gamma_0;\MD)$ such that, up to subsequences, 
\begin{equation}\label{pk-conv-000}
p_k\wto q \quad \text{weakly$^*$ in } M_b(\Omega\cup\Gamma_0;\MD).
\end{equation}
Since $Eu_k=e_k+p_k$ in $\Omega$, the convergence of $\{u_k\}$ and $\{e_k\}$ imply that $Eu=e+q$ in $\Omega$, that is, $q=p$ in $\Omega$.
On the other hand, by lower semicontinuity of the norm $\|\cdot\|_{M_b}$ with respect to weak$^*$ convergence and by \eqref{pk-est}, we obtain
$$
|q|(\Omega\cup\Gamma_0)=|p|(\Omega)+|q|(\Gamma_0)
\leq \liminf_{k\to\infty}\int_\Omega |p_k|\, dx\leq  \limsup_{k\to\infty}\int_\Omega |p_k|\, dx\leq |p|(\Omega).
$$ 
Hence, $q=0$ on $\Gamma_0$, which implies that $q=p$ on $\Omega\cup\Gamma_0$. Thus, \eqref{pk-conv-000}
gives \eqref{pk1} (note that in \eqref{pk-conv-000} the whole sequence converges, since the limit is uniquely determined)
and the equality above gives \eqref{pk2}.

We now prove \eqref{pk-est}.
By \eqref{moll3} we have that
$$
\Big\| \sum_{i=0}^\infty (\nabla\varphi_i \odot u)_D\ast\varrho_{\e_i}\Big\|_{L^{n/(n-1)}}\to 0,
$$
as $k\to\infty$, and by \eqref{div-uk}, \eqref{app-div}, and \eqref{vk-est} we deduce that $(Ev_k)_D$ is converging to $0$ in $L^{n/(n-1)}(\Omega;\MD)$.
Finally, 
$$
\sum_{i=0}^\infty \int_\Omega |(\varphi_i p)\ast\varrho_{\e_i}|\, dx
\leq \sum_{i=0}^\infty \int_\Omega \varphi_i \,d |p| =|p|(\Omega).
$$
Combining these observations together, we deduce \eqref{pk-est}.
\medskip 

\noindent{\it Step 2.} We now show that any triplet $(u,e,p)\in\mathcal A(0, \Gamma_0)$ can be approximated in the sense of \eqref{uk}--\eqref{pk2}
by a sequence of triplets $(u_k,e_k,p_k)$ in $\mathcal A(0, \Gamma_0)$ such that $u_k=0$ on $\Gamma_0$ (hence, $p_k=0$ on $\Gamma_0$ by \eqref{A2}) for every $k$. In this step we will use the $C^{2,1}$ regularity of $\partial\Omega$, since the construction will be based on Theorem~\ref{AGlemma}.

Let $(u,e,p)\in\mathcal A(0, \Gamma_0)$ and let $\chi_{\Gamma_0}$ be the characteristic function of $\Gamma_0$. By Theorem~\ref{AGlemma} there exists a function $v\in W^{1,1}(\Omega;\R^n)$, with $\Div v\in L^2(\Omega)$, such that $\supp\, v\subset \Omega_a$, 
$v=-\chi_{\Gamma_0}u$ on $\partial\Omega$, and
$$
v(x)\cdot\nabla d(x)=0 
$$ 
for a.e.\ $x\in\Omega_a$.
For $k\in\N$ large enough we define $\eta_k(x):=\max\{0, 1-kd(x)\}$, $x\in\Omega$, and
\begin{align*}
& u_k:=u+\eta_k v, \qquad
e_k:=e_D+\frac{1}{n}\eta_k\Div v\, I_{n{\times}n},
\\
& p_k:=p\mres\Omega + \eta_k (Ev)_D +\nabla\eta_k\odot v.
\end{align*}
Note that $\nabla\eta_k=-k\nabla d$ a.e.\ in $\Omega_{1/k}$, so that 
$\nabla\eta_k(x)\odot v(x)\in \MD$ for a.e.\ $x\in\Omega$.
Since $\Div v\in L^2(\Omega)$, we have that $e_k\in L^2(\Omega;\Mn)$. 
Moreover, $u_k=0$ on $\Gamma_0$, since by construction $v=-u$ on $\Gamma_0$.
Thus, $(u_k, e_k, p_k)\in\mathcal A(0, \Gamma_0)$ and satisfies the required additional condition on $\Gamma_0$. 

It is easy to see that $u_k\to u$ strongly in $L^{n/(n-1)}(\Omega;\R^n)$ and
$e_k\to e$ strongly in $L^2(\Omega;\Mn)$, as $k\to\infty$. To conclude, it is enough to show that
\begin{equation}\label{pk-fin}
\limsup_{k\to\infty} |p_k|(\Omega)\leq |p|(\Omega\cup\Gamma_0).
\end{equation}
Indeed, if \eqref{pk-fin} holds, then $\{p_k\}$ is bounded in $M_b(\Omega\cup\Gamma_0;\MD)$. In particular, there exists $q\in M_b(\Omega\cup\Gamma_0;\MD)$
such that 
\begin{equation}\label{pkq}
p_k\wto q \quad \text{weakly$^*$ in } M_b(\Omega\cup\Gamma_0;\MD).
\end{equation}
Let $U$ be an open set of $\R^n$ such that $U\cap\partial\Omega=\Gamma_0$ and let us consider the extensions
$$
\tilde u_k:=\begin{cases} u_k & \text{ in }\Omega, \\ 0 & \text{ in } U\setminus\Omega,
\end{cases} \qquad
\tilde e_k:=\begin{cases} e_k & \text{ in }\Omega, \\ 0 & \text{ in } U\setminus\Omega,
\end{cases} \qquad
\tilde p_k:=\begin{cases} p_k & \text{ in }\Omega\cup\Gamma_0, \\ 0 & \text{ in } U\setminus\overline\Omega,
\end{cases}
$$
and
$$
\tilde u:=\begin{cases} u & \text{ in }\Omega, \\ 0 & \text{ in } U\setminus\Omega,
\end{cases} \qquad
\tilde e:=\begin{cases} e & \text{ in }\Omega, \\ 0 & \text{ in } U\setminus\Omega,
\end{cases} \qquad
\tilde q:=\begin{cases} q & \text{ in }\Omega\cup\Gamma_0, \\ 0 & \text{ in } U\setminus\overline\Omega.
\end{cases}
$$
Then, $\tilde u_k\to\tilde u$ strongly in $L^{n/(n-1)}(\Omega;\R^n)$, $\tilde e_k\to\tilde e$ strongly in $L^2(\Omega;\Mn)$,
and $\tilde p_k\wto\tilde q$ weakly$^*$ in $M_b(\Omega\cup\Gamma_0;\MD)$. Since $E\tilde u_k=\tilde e_k+\tilde p_k$ in $\Omega\cup U$,
we deduce that $E\tilde u=\tilde e+\tilde q$ in $\Omega\cup U$, hence $q=p$ in $\Omega\cup\Gamma_0$.
Therefore, \eqref{pkq} and \eqref{pk-fin} yield \eqref{pk1} and \eqref{pk2}.

We now prove \eqref{pk-fin}.
By definition of $p_k$ we have that
$$
|p_k|(\Omega) \leq |p|(\Omega) + \int_\Omega |\eta_k (Ev)_D|\, dx + \int_\Omega |\nabla\eta_k\odot v|\,dx.
$$
It is immediate to see that the second term on the right-handside converges to $0$, as $k\to\infty$. Thus, to prove the claim \eqref{pk-fin},
it suffices to prove that
\begin{equation}\label{pk-fin-1}
\limsup_{k\to\infty} \int_\Omega |\nabla\eta_k\odot v|\,dx \leq |p|(\Gamma_0).
\end{equation}
Using the definition of $\eta_k$ we obtain
\begin{equation}\label{goloc0}
\int_\Omega |\nabla\eta_k\odot v|\,dx
= k \int_{\Omega_{1/k}} |\nabla d\odot v|\,dx.
\end{equation}
For $k$ sufficiently small we can cover $\Omega_{1/k}$ with a finite number of open sets $A_j$, $j=1,\dots, N$, such that
for every $j=1,\dots, N$ the map $\Psi_j:(-1,1)^{n-1}\times(-\frac2k,\frac2k)\to A_j$ given by $\Psi_j(x',x_n)=(x',g_j(x'))-x_n\nu_{\partial\Omega}(x',g_j(x'))$
is a $C^{1,1}$ diffeomorphism. Here $g_j$ is a $C^{2,1}$ function whose subgraph represents $\Omega$ in $A_j$. In other words,
we may assume that $\Psi_j((-1,1)^{n-1}{\times}(0,\frac1k))=A_j\cap\Omega$.
Let $\{\varphi_j\}$ be a partition of unity subordinated to the covering $\{A_j\}$. Then
\begin{equation}\label{goloc}
k \int_{\Omega_{1/k}} |\nabla d\odot v|\,dx=
k \int_{\Omega_{1/k}} \Big|\nabla d\odot \sum_{j=1}^N\varphi_j v\Big|\,dx
\leq
k\sum_{j=1}^N \int_{\Omega_{1/k}\cap A_j} |\nabla d\odot \varphi_j v|\,dx.
\end{equation}
By a change of variable we have
\begin{multline*}
k\int_{\Omega_{1/k}\cap A_j} |\nabla d\odot \varphi_j v|\,dx
\\
= k \int_0^{1/k}\!\!\!\int_{(-1,1)^{n-1}} |\nabla d\big(\Psi_j(x',x_n)\big)\odot (\varphi_jv)\big(\Psi_j(x',x_n)\big)| \det D\Psi_j(x',x_n)\, dx'\, dx_n.
\end{multline*}
By Fubini Theorem and the definition of trace we have that there exists a set $M$ with $\LL^1(M)=0$ such that
$$
(\varphi_jv)\big(\Psi_j(x',t)\big)\to (\varphi_jv)\big(\Psi_j(x',0)\big) \qquad \text{ in } L^1((-1,1)^{n-1};\R^n),
$$
as $t\to0^+$, $t\not\in M$, where the limit $(\varphi_jv)\big(\Psi_j(x',0)\big)$ is intended in the sense of traces. Thus, we conclude that
$$
k\int_{\Omega_{1/k}\cap A_j} |\nabla d\odot \varphi_j v|\,dx
\to  \int_{(-1,1)^{n-1}} |\nu_{\partial\Omega}\big(\Psi_j(x',0)\big)\odot w_j(x', 0)| \det D\Psi_j(x',0)\, dx'.
$$
By the area formula we have
$$
\int_{(-1,1)^{n-1}} |\nu_{\partial\Omega}\big(\Psi_j(x',0)\big)\odot w_j(x', 0)| \det D\Psi_j(x',0)\, dx'
=\int_{A_j\cap\partial\Omega} |\nu_{\partial\Omega}\odot \varphi_jv| \, d\HH^{n-1}.
$$
Combining the previous equations with \eqref{goloc0} and \eqref{goloc}, we deduce that
\begin{eqnarray*}
\limsup_{k\to\infty} \int_\Omega |\nabla\eta_k\odot v|\,dx & \leq &
\sum_{j=1}^N \int_{A_j\cap\partial\Omega} |\nu_{\partial\Omega}\odot \varphi_jv| \, d\HH^{n-1}
\\
& = & \int_{\partial\Omega} \sum_{j=1}^N \varphi_j |\nu_{\partial\Omega}\odot v| \, d\HH^{n-1}
\\
& = &  \int_{\Gamma_0} |-u\odot \nu_{\partial\Omega} |\,d\HH^{n-1}
=|p|(\Gamma_0),
\end{eqnarray*}
where we used that $v=-\chi_{\Gamma_0}u$ on $\partial\Omega$.
\medskip

\noindent{\it Step 3.} To conclude, it is enough to apply a diagonal argument, together with the remark
that bounded sets of $M_b(\Omega\cup\Gamma_0;\MD)$ are metrizable with respect to weak$^*$ convergence.
\end{proof}

When the boundary condition is prescribed on the whole boundary, that is, $\Gamma_0=\partial\Omega$, 
a different construction of the approximating sequence can be performed. This new construction requires only Lipschitz regularity of the boundary and leads to more regular approximating triplets. More precisely, we have the following theorem.

\begin{theorem}\label{thm:density-dir}
Let $\Omega\subset \R^n$ be a bounded domain with a Lipschitz boundary. Assume $\Gamma_0=\partial\Omega$. 
Let $w\in W^{1,2}(\R^n;\R^n)$ and let $(u,e,p)\in\mathcal A(w, \Gamma_0)$. Then there exists a sequence of triplets $(u_k, e_k,p_k)$ in $\mathcal A_{reg}(w, \Gamma_0)$ such that
$$
(u_k-w, e_k-Ew, p_k)\in C^\infty_c(\Omega;\R^n)\times C^\infty_c(\Omega;\Mn)\times C^\infty_c(\Omega;\MD)
$$
for every $k$, and
\begin{align}
& u_k\to u \quad \text{strongly in } L^{n/(n-1)}(\Omega;\R^n), \label{uk-d}\\
& e_k\to e \quad \text{strongly in } L^2(\Omega;\Mn), \label{ek-d}\\
& p_k\wto p \quad \text{weakly$^*$ in } M_b(\overline\Omega;\MD), \label{pk1-d} 
\end{align}
and
\begin{equation}\label{pk2-d}
\int_\Omega |p_k(x)|\,dx \to |p|(\overline\Omega),
\end{equation}
as $k\to\infty$.
\end{theorem}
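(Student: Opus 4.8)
The plan is to mirror the three-step scheme of Theorem~\ref{thm:density}, but exploiting the fact that now $\Gamma_0=\partial\Omega$, so that after subtracting $w$ we must approximate a triplet $(u,e,p)\in\mathcal A(0,\partial\Omega)$ by triplets whose shifted versions lie in $C^\infty_c(\Omega;\R^n)\times C^\infty_c(\Omega;\Mn)\times C^\infty_c(\Omega;\MD)$. As announced in the introduction, the key idea replacing the Anzellotti--Giaquinta construction is an \emph{inward translation of the boundary}: extend $(u,e,p)$ by $(0,0,0)$ outside $\Omega$ to obtain $(\tilde u,\tilde e,\tilde q)$ living on some open neighbourhood $U\supset\overline\Omega$, with $E\tilde u=\tilde e+\tilde p$ in $U$ (the boundary relation $p=(0-u)\odot\nu\,\HH^{n-1}$ on $\partial\Omega$ is exactly what makes this extension kinematically admissible, since the jump of $\tilde u$ across $\partial\Omega$ is $-u$). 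For a unit vector field $\xi$ pointing into $\Omega$ near $\partial\Omega$ (constructed locally from the Lipschitz charts and glued by a partition of unity) and a small parameter $t>0$, set $\tau_t(x):=x+t\xi(x)$, and define $u^t:=\tilde u\circ\tau_t$, etc. For $t$ small $\tau_t$ is a bi-Lipschitz map pushing a neighbourhood of $\overline\Omega$ strictly inside $\Omega$; the translated triplet then vanishes in a neighbourhood of $\partial\Omega$.

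First I would carry out the \textbf{translation step}: show that as $t\to 0^+$ one has $u^t\to\tilde u$ in $L^{n/(n-1)}(\Omega;\R^n)$, $e^t\to\tilde e$ in $L^2$, $p^t\wto\tilde p$ weakly$^*$ in $M_b(\overline\Omega;\MD)$, and crucially $|p^t|(\Omega)\to|\tilde p|(U)=|p|(\overline\Omega)$, i.e.\ strict convergence. Continuity of translations in $L^q$ handles $u^t$ and $e^t$; for $p^t$ the total variation is almost preserved under a bi-Lipschitz map with Jacobian close to $1$, and one gets $\limsup_t|p^t|(\Omega)\le|p|(\overline\Omega)$ together with weak$^*$ convergence, hence strict convergence and in particular no mass concentrates on $\partial\Omega$ in the limit. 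One must be slightly careful because $E(u^t)$ is not simply $(Eu)\circ\tau_t$: the chain rule produces the extra factor $D\tau_t$, so the natural decomposition is $E u^t = \big(D\tau_t\big)^T (E\tilde u)\!\circ\!\tau_t\, D\tau_t$ in the sense of measures, and one splits this into an $L^2$ symmetric part (absorbed into $e^t$, using that $D\tau_t\to I$) and an $M_b$ deviatoric part (absorbed into $p^t$). Since $\supp p^t$ is a compact subset of $\Omega$, the measure $p^t$ already has no boundary part.

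Second comes the \textbf{mollification step}, now essentially trivial: since $(u^t,e^t,p^t)$ is supported away from $\partial\Omega$ (after subtracting nothing, because $w$ has been removed), a standard convolution $(\,\cdot\,)\ast\varrho_\e$ with $\e$ smaller than the distance of the support to $\partial\Omega$ produces smooth, compactly supported triplets, and mollification commutes with $E$ and with the algebraic splitting $\xi=\xi_D+\frac1n(\mathrm{tr}\,\xi)I$, so kinematic admissibility $Eu_k=e_k+p_k$ and the incompressibility $\mathrm{tr}\,p_k=0$ are preserved exactly. Convolution is continuous in $L^q$ and strictly continuous on $M_b$ for compactly supported measures, so the four convergences \eqref{uk-d}--\eqref{pk2-d} survive. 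Note one does \emph{not} need Bogovski here, precisely because the divergence constraint is automatic: $\mathrm{div}\,u_k=\mathrm{tr}\,e_k$ with $e_k\in C^\infty_c$, hence in $L^2$. Finally a \textbf{diagonal step} in the parameters $(t,\e)$, using that bounded subsets of $M_b(\overline\Omega;\MD)$ are metrizable for weak$^*$ convergence, yields the single sequence $(u_k,e_k,p_k)$; adding $w$ back gives a sequence in $\mathcal A_{reg}(w,\partial\Omega)$ with the stated structure.

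The main obstacle I expect is the translation step for the \emph{plastic measure}: one must verify that pushing forward $\tilde p$ (a matrix-valued measure, part of which is the singular surface measure $-u\odot\nu\,\HH^{n-1}$ concentrated on $\partial\Omega$) by the bi-Lipschitz map $\tau_t$ does not lose or create total variation in the limit, and that the deviatoric constraint is respected after conjugating by $D\tau_t$. The surface term on $\partial\Omega$ is the delicate piece: under $\tau_t$ it is moved to the surface $\tau_t(\partial\Omega)\subset\Omega$, where it becomes an honest part of $p^t$, and one needs $\int_{\tau_t(\partial\Omega)}|(D\tau_t)^T(u\odot\nu)(D\tau_t)|\,d\HH^{n-1}\to\int_{\partial\Omega}|u\odot\nu|\,d\HH^{n-1}=|p|(\partial\Omega)$ as $t\to 0$, which follows from $D\tau_t\to I$ uniformly and the area formula, but requires care because $|\,(D\tau_t)^T M\, D\tau_t\,|$ is only close to $|M|$, not equal, and because the deviatoric projection and this conjugation do not commute — one controls the discrepancy by a factor $1+\omega(t)$ with $\omega(t)\to0$, which is enough for a $\limsup$ estimate, and the matching $\liminf$ comes from weak$^*$ lower semicontinuity of the total variation.
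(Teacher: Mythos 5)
Your overall plan — extend by $0$ outside $\Omega$, translate the extension inward, mollify, diagonalize — is indeed the scheme the paper follows, but the implementation of the translation step has a genuine flaw, and as a consequence the claim that Bogovski's operator can be dispensed with is also incorrect.

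You propose a \emph{single} inward translation $\tau_t(x)=x+t\xi(x)$ where $\xi$ is a non-constant Lipschitz vector field obtained by gluing inward directions with a partition of unity. The paper instead keeps the translation vectors \emph{constant} in each chart $A_j$ and applies the partition of unity to the \emph{functions}: it sets $\tilde u_k:=\sum_j(\varphi_j\tilde u)\circ\tau_{j,k}$ with $\tau_{j,k}(x)=x+\tfrac1k\xi_j$, $\xi_j$ a fixed vector. This distinction is not cosmetic. If $\tau_t$ is not a rigid motion, then $E(\tilde u\circ\tau_t)=\mathrm{sym}\big[(D\tilde u\circ\tau_t)\,D\tau_t\big]$, which depends on the full distributional gradient $D\tilde u$, not only on $E\tilde u$; your asserted chain rule $Eu^t=(D\tau_t)^T(E\tilde u\circ\tau_t)D\tau_t$ is simply wrong. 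Since $\tilde u$ is only in $BD$, the antisymmetric part $W\tilde u$ is in general \emph{not} a bounded measure (this is the Ornstein phenomenon separating $BD$ from $BV$), so $\tilde u\circ\tau_t$ need not belong to $BD$, and the decomposition into an $L^2$ part $e^t$ and a deviatoric measure $p^t$ cannot even be set up. Composing with a rigid translation commutes with $E$ exactly, which is why the paper's local translations are rigid; the price is the partition-of-unity sum, not a single global map.

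The second issue is the assertion that ``one does not need Bogovski here, precisely because the divergence constraint is automatic.'' It is not. Write out the divergence of the paper's translated displacement:
$\mathrm{div}\,\tilde u_k=\sum_j\big[\varphi_j\,\mathrm{div}\,\tilde u+\nabla\varphi_j\cdot\tilde u\big]\circ\tau_{j,k}$.
The first terms are in $L^2$ because $\mathrm{div}\,\tilde u\in L^2(U)$, but the terms $\nabla\varphi_j\cdot\tilde u$ are only in $L^{n/(n-1)}(\Omega)$, since $\tilde u$ is only $L^{n/(n-1)}$. Thus $\mathrm{div}\,\tilde u_k$ (hence $\mathrm{tr}\,\tilde e_k$) is merely $L^{n/(n-1)}$ after translation and mollification, and one must post-compose by the Bogovskii correction to restore $L^2$-integrability of the divergence; this is exactly what the paper does. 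In your set-up the situation is worse: for a non-rigid $\tau_t$ the chain rule adds a term $t\,\mathrm{tr}\big[(D\tilde u\circ\tau_t)\,D\xi\big]$ to $\mathrm{div}\,u^t$, which involves $D\tilde u$ and would in general have a singular part, so $\mathrm{div}\,u^t$ would not even be a function. In short: the partition of unity and Bogovskii are not avoidable embellishments but the two structural ingredients that make the translation step legitimate, and your proposal removes both.
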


\begin{proof}
Upon replacing the triplet $(u,e,p)$ by $(u-w,e-Ew,p)$, we can assume that $w=0$.

We consider the extensions
$$
\tilde u:=\begin{cases} u & \text{ in }\Omega, \\ 0 & \text{ in } U\setminus\Omega,
\end{cases} \qquad
\tilde e:=\begin{cases} e & \text{ in }\Omega, \\ 0 & \text{ in } U\setminus\Omega,
\end{cases} \qquad
\tilde p:=\begin{cases} p & \text{ in }\overline\Omega, \\ 0 & \text{ in } U\setminus\overline\Omega,
\end{cases}
$$
where $U$ is an open and bounded set such that $\Omega$ is compactly contained in $U$.
Note that $\tilde u\in BD(U)$, $\tilde e \in L^2(U;\Mn)$, $\tilde p\in M_b(U;\MD)$, and $E\tilde u=\tilde e+\tilde p$ in $U$.
In particular, we have that $\Div \tilde u\in L^2(U)$.

Since $\Omega$ is bounded and has a Lipschitz boundary, there exists a finite open cover $\{A_j\}$, $j=1,\dots,N$, of $\partial\Omega$,
made of open cubes centred at points on $\partial\Omega$, with a face orthogonal to some vector $\xi_j\in \mathbb S^1$ and such that
the set $A_j\cap\Omega$ is a Lipschitz subgraph in the direction $\xi_j$. We set $A_0:=\Omega$ and $\xi_0:=0$.
For every $j=0,\dots,N$ and every $k\in\N$ we introduce the translation
$$
\tau_{j,k}(x):=x+\frac1k \xi_j \qquad \text{ for } x\in \R^n.
$$
Finally, let $\{\varphi_j\}$ be a partition of unity subordinated to $\{A_j\}$ and let $\{\varrho_\e\}$ be a family of mollifiers.

We define
$$
\begin{array}{c}
\displaystyle
\tilde u_k:=\sum_{j=0}^N (\varphi_j \tilde u)\circ\tau_{j,k}, \qquad
\tilde e_k:= \sum_{j=0}^N (\varphi_j \tilde e)_D\circ\tau_{j,k} +\frac1n \Div\tilde u_k\, I_{n\times n},
\smallskip\\
\displaystyle
\tilde p_k:=\sum_{j=0}^N \tau_{j,k}^\#(\varphi_j \tilde p) + \sum_{j=0}^N (\nabla\varphi_j \odot \tilde u)_D\circ\tau_{j,k},
\end{array}
$$
where $\tau_{j,k}^\#(\varphi_j \tilde p)$ denotes the pull-back measure of $\varphi_j \tilde p$.
We observe that $\tilde u_k\in BD(\Omega)$, $(\tilde e_k)_D\in L^2(\Omega;\MD)$,
$\Div\tilde u_k={\rm tr}\,\tilde e_k\in L^{n/(n-1)}(\Omega)$,
$\tilde p_k\in M_b(\Omega;\MD)$, and they all have compact support in $\Omega$
for $k$ sufficiently small. 
Moreover, $E\tilde u_k =\tilde e_k+\tilde p_k$ in $\Omega$
and, as $k\to\infty$, we have 
\begin{eqnarray}
& \tilde u_k\to u \qquad \text{ strongly in } L^{n/(n-1)}(\Omega;\R^n), \label{Dir-huk}
\\
& \Div \tilde u_k \to \Div u \qquad \text{ strongly in } L^{n/(n-1)}(\Omega), \label{dive0}
\\
& (\tilde e_k)_D \to e_D \qquad \text{ strongly in } L^2(\Omega;\MD), \label{Dir-hek}
\end{eqnarray}
and
\begin{equation}\label{resto}
\sum_{j=0}^N (\nabla\varphi_i \odot \tilde u)_D\circ\tau_{j,k} \to 0 \qquad  \text{ strongly in } L^{n/(n-1)}(\Omega;\MD).
\end{equation}
Since $\tau_{j,k}$ is an outward translation on $A_j\cap\Omega$, we have that
$$
\sum_{j=0}^N | \tau_{j,k}^\#(\varphi_j \tilde p)|(\Omega)
\leq \sum_{j=0}^N |\varphi_j \tilde p|(U)
= \sum_{j=0}^N \int_U \varphi_j \, d|\tilde p|
= |p|(\overline\Omega)
$$
for every $k\in\N$, hence by \eqref{resto} we deduce that
\begin{equation}\label{dir-pk-est-2}
\limsup_{k\to\infty} |\tilde p_k|(\Omega)\leq |p|(\overline\Omega).
\end{equation}

We now set
$$
\hat u_k:=\tilde u_k\ast \varrho_{\e_k}, \qquad \hat e_k:=\tilde e_k\ast \varrho_{\e_k}, \qquad \hat p_k:=\tilde p_k\ast \varrho_{\e_k}, 
$$
where $\e_k$ is chosen so that $\hat u_k\in C^\infty_c(\Omega;\R^n)$, $\hat e_k\in C^\infty_c(\Omega;\Mn)$, $\hat p_k\in C^\infty_c(\Omega;\MD)$,
and
\begin{align}
& \|\hat u_k -\tilde u_k\|_{L^{n/(n-1)}}\leq\frac1k, \label{hatti-uk}
\\
& \|\Div\hat u_k -\Div\tilde u_k\|_{L^{n/(n-1)}}\leq\frac1k, \label{dive1}
\\
& \|(\hat e_k)_D -(\tilde e_k)_D\|_{L^2} \leq\frac1k, \label{hatti-ek}
\\
& \Big| \int_\Omega|\hat p_k|\, dx- |\tilde p_k|(\Omega)\Big|\leq\frac1k, \label{hatti-pk}
\end{align}
where the last inequality follows from the fact that $|\tilde p_k|(\partial\Omega)=0$, hence mollifications of $\tilde p_k$ strictly converge to $\tilde p_k$ in $\Omega$. Clearly, we still have that $E\hat u_k=\hat e_k+\hat p_k$ in $\Omega$.

We now introduce a correction for $\Div \hat u_k$.
Since $\Div u\in L^2(\Omega)$ and $\Div\hat u_k\to\Div u$ in $L^{n/(n-1)}(\Omega)$
by \eqref{dive0} and \eqref{dive1}, we can construct $\psi_k\in C^\infty_c(\Omega)$ such that
\begin{equation}\label{app-div-dir}
\|\psi_k-\Div u\|_{L^2}\leq\frac1k,
\end{equation}
and
\begin{equation}\label{same-av-dir}
\int_\Omega \psi_k(x)\, dx =\int_\Omega \Div \hat u_k(x)\, dx.
\end{equation}
We denote by $v_k$ a solution of the system
$$
\begin{cases}
\Div v_k=\psi_k-\Div \hat u_k & \text{ in }\Omega,\\
v_k=0 & \text{ on }\partial\Omega.
\end{cases}
$$
By \cite[Remark~4]{B} (see also \cite[Theorem~2.4]{B-S}) condition \eqref{same-av-dir} guarantees the existence of a solution $v_k\in C^\infty_c(\Omega;\R^n)$ such that
$$
\|v_k\|_{W^{1,n/(n-1)}}\leq C \|\psi_k-\Div \hat u_k\|_{L^{n/(n-1)}},
$$
where $C$ is a constant independent of $k$. Combining this inequality with \eqref{dive0}, \eqref{dive1}, and \eqref{app-div-dir},
we deduce that
\begin{equation}\label{Dir-vk-est}
v_k\to 0 \qquad \text{ strongly in } W^{1,n/(n-1)}(\Omega;\R^n).
\end{equation}

We are now ready to define the approximating sequence. We set
$$
u_k:=\hat u_k+v_k, \qquad  e_k:= \hat e_k + \frac1n \Div v_k \, I_{n{\times}n},
\qquad
p_k:=\hat p_k+ (Ev_k)_D.
$$
It is immediate to see that $u_k\in C^\infty_c(\Omega;\R^n)$, $e_k\in C^\infty_c(\Omega;\Mn)$, and $p_k\in C^\infty_c(\Omega;\MD)$.
Moreover, $Eu_k=e_k+p_k$ in $\Omega$, so that $(u_k,e_k,p_k)\in \mathcal A_{reg}(0, \Gamma_0)$ for every $k$. 
By \eqref{Dir-huk}, \eqref{hatti-uk}, and \eqref{Dir-vk-est} we immediately deduce \eqref{uk-d}.
Note that
$$
e_k=(\hat e_k)_D+ \frac1n \psi_k\, I_{n\times n},
$$
hence, \eqref{Dir-hek}, \eqref{hatti-ek}, and \eqref{app-div-dir} yield \eqref{ek-d}. 
Finally, by \eqref{dir-pk-est-2}, \eqref{hatti-pk}, and \eqref{Dir-vk-est} we deduce that
\begin{equation}\label{dir-pk-est}
\limsup_{k\to\infty} \int_\Omega |p_k|\, dx \leq \limsup_{k\to\infty} \int_\Omega |\hat p_k|\, dx
 \leq \limsup_{k\to\infty} |\tilde p_k|(\Omega) \leq 
| p|(\overline\Omega).
\end{equation}
This last inequality is enough to conclude.
Indeed, extending to $0$ the triplets $(u_k,e_k,p_k)$ outside $\overline\Omega$,
we have by \eqref{dir-pk-est} that $\{p_k\}$ is bounded in $M_b(U;\MD)$, hence, up to subsequences,
\begin{equation}\label{www}
p_k\wto q \qquad \text{ weakly$^*$ in } M_b(U;\MD).
\end{equation}
By \eqref{uk-d} and \eqref{ek-d} we deduce that $E\tilde u=\tilde e+q$ in $U$, hence, in particular, $q=p$ in $\overline\Omega$.
This fact, together with \eqref{dir-pk-est} and \eqref{www}, yields \eqref{pk1-d} and \eqref{pk2-d}.
This concludes the proof of the theorem.
\end{proof}

\section{The Relaxation Result}

In this section we apply the density theorems of Section~\ref{sec:density} to characterise the relaxation of the Hencky model.
In the notation of the introduction we prove that \eqref{min pb 2} is the relaxed problem of \eqref{min pb} when $p_{i-1}=0$.

For the sake of notation it is convenient to express the involved functionals in terms of the displacement $u$ 
and of the elastic strain $e$, only. The plastic strain $p$ can be always recovered a posteriori 
by the kinematic compatibility condition. 

\begin{definition}
Let $w\in W^{1,2}(\R^n;\R^n)$ and let $\Gamma_0$ be an open subset of $\partial\Omega$. 
We define $\mathcal B_{reg}(w, \Gamma_0)$ as the class of all pairs $(u,e)\in LD(\Omega)\times L^2(\Omega;\Mn)$
such that 
\begin{align}
& \Div u={\rm tr}\,e \quad \text{a.e.\ in }  \Omega, \label{Breg1}
\\
& u=w \quad \text{ on }\Gamma_0. \label{Breg2}
\end{align}
\end{definition}

\begin{definition}
We define $\mathcal B$ as the class of all pairs $(u,e)\in BD(\Omega)\times L^2(\Omega;\Mn)$
such that \eqref{Breg1} holds.
\end{definition}

Note that $(u,e)\in \mathcal B_{reg}(w, \Gamma_0)$ if and only if there exists $p\in L^1(\Omega;\MD)$
such that $(u,e,p)\in \mathcal A_{reg}(w, \Gamma_0)$.
Given any $w\in W^{1,2}(\R^n;\R^n)$ and any $\Gamma_0$ open subset of $\partial\Omega$, we have that
$(u,e)\in \mathcal B$ if and only if there exists $p\in M_b(\Omega\cup\Gamma_0;\MD)$
such that $(u,e,p)\in \mathcal A(w, \Gamma_0)$.

We are now in a position to state the main result of this section.

\begin{theorem}\label{thm:relax}
Let $Q:\Mn\to[0,+\infty)$ be a positive definite quadratic form and let $H:\MD\to[0,+\infty]$ a convex and positively one-homogeneous function
such that
\begin{equation}\label{H-coer}
r|\xi|\leq H(\xi)\leq R|\xi| \qquad \text{ for every }\xi\in\MD,
\end{equation}
with $0<r\leq R$.
Assume one of the two following conditions: either\smallskip
\begin{itemize}
\item[(i)] $\Omega\subset \R^n$ is an open and bounded set with a $C^{2,1}$ boundary and $\Gamma_0$ an open subset of~$\partial\Omega$,
\end{itemize}
or
\begin{itemize}
\item[(ii)] $\Omega\subset \R^n$ is a bounded domain with a $Lipschitz$ boundary and $\Gamma_0=\partial\Omega$.
\end{itemize}
\smallskip
Let $w\in W^{1,2}(\R^n;\R^n)$. Let $\mathcal F: BD(\Omega)\times L^2(\Omega;\Mn)\to[0,+\infty]$
given by
$$
\mathcal F(u,e):=
\begin{cases}
\displaystyle
 \int_\Omega Q(e(x))\, dx +\int_\Omega H(Eu(x)-e(x))\, dx
& \text{ if } (u,e)\in {\mathcal B}_{reg}(w,\Gamma_0),
\medskip
\\
+\infty
& \text{ otherwise.} 
\end{cases}
$$
The lower semicontinuous envelope of $\mathcal F$, with respect to the product of the $L^1(\Omega;\R^n)$-strong topology and the $L^2(\Omega;\Mn)$-weak topology, is the functional $\mathcal G: BD(\Omega)\times L^2(\Omega;\Mn)\to[0,+\infty]$
given by
$$
\mathcal G(u,e):=
\int_\Omega Q(e(x))\, dx +\HH_\Omega(Eu-e) +\int_{\Gamma_0} H((w-u)\odot\nu_{\partial\Omega})\, d\HH^{n-1}
$$
if $(u,e)\in {\mathcal B}$, and
$\mathcal G(u,e):=+\infty$ otherwise. 
\end{theorem}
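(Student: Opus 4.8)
The plan is to prove the $\Gamma$-convergence statement by establishing the two standard inequalities: the $\liminf$ inequality (lower bound) and the $\limsup$ inequality (existence of a recovery sequence), with respect to the topology $\mathcal T$ given by the product of the strong $L^1(\Omega;\R^n)$ topology on $u$ and the weak $L^2(\Omega;\Mn)$ topology on $e$. Since $\mathcal G$ is automatically lower semicontinuous once we know it is the $\Gamma$-limit, and since the lower semicontinuous envelope is the largest l.s.c.\ functional below $\mathcal F$, it suffices to show: (a) $\mathcal G$ is $\mathcal T$-lower semicontinuous and $\mathcal G\le\mathcal F$; and (b) for every $(u,e)\in\mathcal B$ there is a sequence $(u_k,e_k)\to(u,e)$ in $\mathcal T$ with $(u_k,e_k)\in\mathcal B_{reg}(w,\Gamma_0)$ and $\mathcal F(u_k,e_k)\to\mathcal G(u,e)$.

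For step (a), the inequality $\mathcal G\le\mathcal F$ is immediate: if $(u,e)\in\mathcal B_{reg}(w,\Gamma_0)$ then $u=w$ on $\Gamma_0$, so the boundary integral $\int_{\Gamma_0}H((w-u)\odot\nu)\,d\HH^{n-1}$ vanishes, while $\HH_\Omega(Eu-e)=\int_\Omega H(Eu-e)\,dx$ because $Eu-e=p\in L^1(\Omega;\MD)$ is absolutely continuous; and if $(u,e)\notin\mathcal B_{reg}$ the right-hand side is $+\infty$ unless $(u,e)\in\mathcal B$, in which case $\mathcal F=+\infty\ge\mathcal G$ trivially. For the lower semicontinuity of $\mathcal G$ on $\mathcal B$, I would argue as follows. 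First, $\mathcal B$ is $\mathcal T$-closed: if $(u_k,e_k)\in\mathcal B$ and $u_k\to u$ in $L^1$, $e_k\wto e$ in $L^2$, then along a subsequence $u_k\wto u$ weakly$^*$ in $BD(\Omega)$ (using a bound on $Eu_k=e_k+p_k$ once we also know $\mathcal G$ is bounded — actually we only need lower semicontinuity along sequences with bounded $\mathcal G$, so we may assume $\sup_k\mathcal G(u_k,e_k)<\infty$, which via $H\ge r|\cdot|$ bounds $|p_k|(\Omega\cup\Gamma_0)$, hence $\|Eu_k\|_{M_b}$ and $\|u_k\|_{BD}$), so $Eu_k\wto Eu$ and $\Div u=\mathrm{tr}\,e$ passes to the limit. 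Then $\int_\Omega Q(e)\,dx$ is $L^2$-weakly l.s.c.\ since $Q$ is a positive definite quadratic form (convexity). The delicate term is $\HH_\Omega(Eu-e)+\int_{\Gamma_0}H((w-u)\odot\nu)\,d\HH^{n-1}$: I would recognise this as a single convex functional of measures on the locally compact set $\Omega\cup\Gamma_0$. Precisely, extending as in the density proofs, set $\hat p_k:=(E u_k-e_k)\mres\Omega + (w-u_k)\odot\nu\,\HH^{n-1}\mres\Gamma_0\in M_b(\Omega\cup\Gamma_0;\MD)$; using the trace estimate \eqref{kornbd} and an argument like in Step~2 of the proof of Theorem~\ref{thm:density} (extending by $0$ across $\Gamma_0$ into an open set $U$ with $U\cap\partial\Omega=\Gamma_0$), one checks $\hat p_k\wto\hat p$ weakly$^*$ in $M_b(\Omega\cup\Gamma_0;\MD)$, where $\hat p$ is the corresponding measure for $(u,e)$. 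Then $\HH_{\Omega\cup\Gamma_0}(\hat p)\le\liminf_k\HH_{\Omega\cup\Gamma_0}(\hat p_k)$ by the weak$^*$ lower semicontinuity of $\HH$ recalled in Section~\ref{sec:math-p} (via \cite[Theorem~2.38]{AFP}), and since $\hat p_k\mres\Gamma_0$ has density $(w-u_k)\odot\nu$ with respect to $\HH^{n-1}\mres\Gamma_0$ one identifies $\HH_{\Omega\cup\Gamma_0}(\hat p_k)=\int_\Omega H(Eu_k-e_k)\,dx+\int_{\Gamma_0}H((w-u_k)\odot\nu)\,d\HH^{n-1}$ and likewise for the limit; combined with the $Q$-term this gives $\mathcal G(u,e)\le\liminf_k\mathcal G(u_k,e_k)$. (One subtlety: on $\Omega$, $\hat p_k$ need not be absolutely continuous — but the general definition of $\HH_\Omega$ as a convex function of measures is exactly what makes the inequality work, and on the approximating side $\hat p_k\mres\Omega=Eu_k-e_k$ is just a measure so no absolute continuity is needed for l.s.c.)

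For step (b), the recovery sequence, this is precisely where the density theorems enter. Given $(u,e)\in\mathcal B$, choose any $p\in M_b(\Omega\cup\Gamma_0;\MD)$ with $(u,e,p)\in\mathcal A(w,\Gamma_0)$ — concretely $p:=(Eu-e)\mres\Omega+(w-u)\odot\nu\,\HH^{n-1}\mres\Gamma_0$, which indeed satisfies \eqref{A1}–\eqref{A2}. Apply Theorem~\ref{thm:density} under hypothesis (i), or Theorem~\ref{thm:density-dir} under hypothesis (ii), to obtain $(u_k,e_k,p_k)\in\mathcal A_{reg}(w,\Gamma_0)$ with $u_k\to u$ strongly in $L^{n/(n-1)}$ (hence in $L^1$), $e_k\to e$ strongly in $L^2$ (hence weakly), $p_k\wto p$ weakly$^*$ and $\int_\Omega|p_k|\,dx\to|p|(\Omega\cup\Gamma_0)$, i.e.\ $p_k\to p$ \emph{strictly} in the sense of measures. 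Then $(u_k,e_k)\in\mathcal B_{reg}(w,\Gamma_0)$, so $\mathcal F(u_k,e_k)=\int_\Omega Q(e_k)\,dx+\int_\Omega H(p_k)\,dx$. Now $\int_\Omega Q(e_k)\,dx\to\int_\Omega Q(e)\,dx$ by strong $L^2$ convergence (continuity of the quadratic form on $L^2$), and $\int_\Omega H(p_k)\,dx=\HH_\Omega(p_k)=\HH_{\Omega\cup\Gamma_0}(p_k\text{ extended by }0)\to\HH_{\Omega\cup\Gamma_0}(p)$ by the continuity of $\mathcal H$ under strict convergence of measures — this is the standard Reshetnyak continuity theorem for positively one-homogeneous convex integrands (see \cite[Theorem~2.39]{AFP}), applicable since $H$ has linear growth by \eqref{H-coer}. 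Identifying $\HH_{\Omega\cup\Gamma_0}(p)=\HH_\Omega(Eu-e)+\int_{\Gamma_0}H((w-u)\odot\nu)\,d\HH^{n-1}$ gives $\mathcal F(u_k,e_k)\to\mathcal G(u,e)$, completing the construction of the recovery sequence. Combining (a) and (b) with the abstract characterisation of the relaxed functional (see \cite{DM}) yields that $\mathcal G$ is the $\mathcal T$-lower semicontinuous envelope of $\mathcal F$.

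I expect the main obstacle to be the lower semicontinuity half, step (a): verifying rigorously that the candidate measures $\hat p_k$ on the locally compact set $\Omega\cup\Gamma_0$ converge weakly$^*$ — which requires controlling the boundary traces $u_k|_{\Gamma_0}$ and passing them to the limit inside $M_b(\Omega\cup\Gamma_0;\MD)$ — and then correctly identifying the weak$^*$ limit's restriction to $\Gamma_0$ as $(w-u)\odot\nu\,\HH^{n-1}$. The key technical tools for this are the boundary trace theory and the Poincaré–Korn inequality \eqref{kornbd} from Section~\ref{sec:math-p}, together with the extension-by-zero device used in Step~2 of the proof of Theorem~\ref{thm:density}; once the weak$^*$ convergence of $\hat p_k$ is in hand, the lower semicontinuity of $\HH_{\Omega\cup\Gamma_0}$ is a black box. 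By contrast, the recovery-sequence half is essentially a direct quotation of the density theorems plus Reshetnyak continuity, so the real content of the relaxation theorem is packaged into Theorems~\ref{thm:density} and~\ref{thm:density-dir}.
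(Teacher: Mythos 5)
Your proposal is correct and follows essentially the same two-step scheme as the paper: a lower bound via extension across $\Gamma_0$ by the boundary datum $w$ combined with weak$^*$ lower semicontinuity of $\HH$, and a recovery sequence via the density theorems (Theorem~\ref{thm:density} or \ref{thm:density-dir}) plus Reshetnyak continuity. The only cosmetic difference is that you package the lower bound as $\mathcal T$-lower semicontinuity of $\mathcal G$ on all of $\mathcal B$, whereas the paper directly checks the $\liminf$ inequality for $\mathcal F$ along sequences in $\mathcal B_{reg}(w,\Gamma_0)$ (so the approximants satisfy $u_j=w$ on $\Gamma_0$ and have no boundary jump, making the extension step slightly cleaner); the technical devices are identical.
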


\begin{proof}
Since the functional $\mathcal F$ is coercive in the elastic strain $e$ with respect to the weak topology of $L^2(\Omega;\Mn)$,
the lower semicontinuous envelope $\overline{\mathcal F}$ can be characterised sequentially as
\begin{multline*}
\overline{\mathcal F}(u,e)=\inf\Big\{ \liminf_{j\to\infty} \mathcal F(u_j, e_j): \ u_j\to u \text{ strongly in } L^1(\Omega;\R^n), 
\\
e_j\wto e \text{ weakly in } L^2(\Omega;\Mn) \Big\}.
\end{multline*}

We first prove that $\mathcal G\leq\overline{\mathcal F}$. Let $(u,e)\in BD(\Omega)\times L^2(\Omega;\Mn)$ and let $u_j\to u$ 
strongly in $L^1(\Omega;\R^n)$ and $e_j\wto e$ weakly in $L^2(\Omega;\Mn)$. We want to show that
$$
\liminf_{j\to\infty} {\mathcal F}(u_j, e_j)\geq \mathcal G(u,e).
$$
Without loss of generality we can assume that 
$$
\liminf_{j\to\infty} {\mathcal F}(u_j, e_j)<+\infty
$$
and, up to subsequences, that the above liminf is a limit. Thus, we deduce by \eqref{H-coer} that there exists a constant $C>0$ such that
$$
\int_\Omega |Eu_j(x)-e_j(x)|\, dx \leq C
$$
for every $j$. Up to extracting a further subsequence, we can thus assume that $p_j:=Eu_j-e_j\wto p$ weakly$^*$ in $M_b(\Omega;\MD)$.
Since $p=Eu-e$ in $\Omega$, we have $(u,e)\in\mathcal B$.

Since $\Gamma_0$ is an open subset of $\partial\Omega$, there exists an open set $U$ in $\R^n$ such that $\Gamma_0=\partial\Omega\cap U$.
We define
$$
\tilde u_j:=\begin{cases}
u_j & \text{ in }\Omega,
\\
w & \text{ in } U\setminus\Omega,
\end{cases}
\qquad
\tilde e_j:=\begin{cases}
e_j & \text{ in }\Omega,
\\
Ew & \text{ in } U\setminus\Omega,
\end{cases}
$$
and
$$
\tilde u:=\begin{cases}
u & \text{ in }\Omega,
\\
w & \text{ in } U\setminus\Omega,
\end{cases}
\qquad
\tilde e:=\begin{cases}
e & \text{ in }\Omega,
\\
Ew & \text{ in } U\setminus\Omega.
\end{cases}
$$
Clearly $\tilde u_j\wto \tilde u$ weakly$^*$ in $BD(U)$ and $\tilde e_j\wto\tilde e$ weakly in $L^2(U;\Mn)$.
By definition of the extensions and by lower semicontinuity we deduce that
\begin{eqnarray*}
\liminf_{j\to\infty} \mathcal F(u_j,e_j) & = &  \liminf_{j\to\infty} \int_\Omega Q(e_j(x))\, dx + \HH_U(E\tilde u_j-\tilde e _j)
\\
& \geq & \int_\Omega Q(e(x))\, dx + \HH_U(E\tilde u-\tilde e)
= \mathcal G(u,e).
\end{eqnarray*}

We now prove that $\mathcal G\geq\overline{\mathcal F}$. Let $(u,e)\in \mathcal B$. Set
$$
p:= Eu-e \text{ in }\Omega, \qquad p:=(w-u)\odot\nu_{\partial\Omega}\HH^{n-1} \text{ on } \Gamma_0.
$$
By Theorem~\ref{thm:density} or Theorem~\ref{thm:density-dir} (according to the validity of assumption (i) or (ii), respectively) 
there exists a sequence $(u_j,e_j, p_j)\in {\mathcal A}_{reg}(w,\Gamma_0)$ such that
\begin{align}
& u_j\to u \quad \text{strongly in } L^{n/(n-1)}(\Omega;\R^n), \\
& e_j\to e \quad \text{strongly in } L^2(\Omega;\Mn), \label{ek2-e} \\
& p_j\to p \quad \text{weakly$^*$ in } M_b(\Omega\cup\Gamma_0;\MD), \label{pk2-e}
\end{align}
and
\begin{equation}\label{pk2-2e}
\int_\Omega |p_j(x)|\,dx \to |p|(\Omega\cup\Gamma_0). 
\end{equation}
We now extend $p_j$ and $p$ by $0$ to an open subset $U$ such that $U\cap\partial\Omega=\Gamma_0$, and
we call $\tilde p_j$ and $\tilde p$ the extensions, respectively. By \eqref{pk2-e} and \eqref{pk2-2e} we have that
$\tilde p_j\to \tilde p$ strictly in $M_b(U;\MD)$, hence the Reshetnyak Theorem (see \cite[Theorem~2.39]{AFP})
implies that
\begin{equation}\label{resh}
\int_\Omega H(p_j(x))\, dx=\HH_U(\tilde p_j) \to \HH_{U}(\tilde p).
\end{equation}
Note that
$$
\HH_{U}(\tilde p)=\HH_\Omega(Eu-e) +\int_{\Gamma_0} H((w-u)\odot\nu_{\partial\Omega})\, d\HH^{n-1}.
$$
Since $(u_j,e_j)\in \mathcal B_{reg}(w,\Gamma_0)$ and $p_j=Eu_j-e_j$,
convergences \eqref{ek2-e} and \eqref{resh} imply that
$$
\lim_{j\to\infty} \mathcal F(u_j,e_j)= \mathcal G(e,u).
$$
This concludes the proof.
\end{proof}

\bigskip\bigskip

\noindent
\textbf{Acknowledgements.}
The author acknowledges support by GNAMPA--INdAM and by the European Research Council under Grant No.\ 290888
``Quasistatic and Dynamic Evolution Problems in Plasticity and Fracture''.\bigskip

\end{document}